\newcommand{\paperfont}{\fontsize{11pt}{1.2\baselineskip}\selectfont}
\begin{document}
%\linenumbers
	
%%%%%%%%%% Theorem Enviroment %%%%%%%%%%
\theoremstyle{definition}
\makeatletter
\thm@headfont{\bf}
\makeatother
\newtheorem{theorem}{Theorem}[section]
\newtheorem{definition}[theorem]{Definition}
\newtheorem{lemma}[theorem]{Lemma}
\newtheorem{proposition}[theorem]{Proposition}
\newtheorem{corollary}[theorem]{Corollary}
\newtheorem{remark}[theorem]{Remark}
\newtheorem{example}[theorem]{Example}
\newtheorem{assumption}[theorem]{Assumption}

%%%%%%%%%% Page styles %%%%%%%%%%
\lhead{}
\rhead{}
\lfoot{}
\rfoot{}

%%%%%%%%%% Abbreviations %%%%%%%%%%
\renewcommand{\refname}{References}
\renewcommand{\figurename}{Figure}
\renewcommand{\tablename}{Table}
\renewcommand{\proofname}{Proof}

%%%%%%%%%% Special symbols %%%%%%%%%%
\newcommand{\dnumiag}{\mathrm{diag}}
\newcommand{\tr}{\mathrm{tr}}
\newcommand{\dnum}{\mathrm{d}}
\newcommand{\Enum}{\mathbb{E}}
\newcommand{\Pnum}{\mathbb{P}}
\newcommand{\Rnum}{\mathbb{R}}
\newcommand{\Cnum}{\mathbb{C}}
\newcommand{\Znum}{\mathbb{Z}}
\newcommand{\Nnum}{\mathbb{N}}
\newcommand{\RomanNum}[1]{\uppercase\expandafter{\romannumeral #1\relax}}
\newcommand{\abs}[1]{\left\vert#1\right\vert}
\newcommand{\set}[1]{\left\{#1\right\}}
\newcommand{\norm}[1]{\left\Vert#1\right\Vert}
\newcommand{\innp}[1]{\langle {#1}]}
\newcommand\mi{\mathrm{i}}
\newcommand\dif{\,\mathrm{d}}
\newcommand\Pro{\mathbb{P}}
\newcommand\outercontrollable{\textit{controllable}}
\newcommand\innercontrollable{\textit{Inner-controllable}}
\renewcommand{\theequation}{\arabic{section}.\arabic{equation}}

%%%%%%%%%% Title and authors %%%%%%%%%%
\title{\textbf{Moderate and $L^p$ maximal inequalities for diffusion processes and conformal martingales}}
\author{Xian Chen$^1$,\;\;\;Yong Chen$^2$,\;\;\;Yumin Cheng$^2$,\;\;\;Chen Jia$^{3,*}$ \\
\footnotesize $^1$School of Mathematics Sciences, Xiamen University, Xiamen 361005, Fujian, China \\
\footnotesize $^2$School of Mathematics and Statistics, Jiangxi Normal University, Nanchang 330022, Jiangxi, China \\
\footnotesize $^3$Applied and Computational Mathematics Division, Beijing Computational Science Research Center, Beijing 100193, China \\
\footnotesize Correspondence: chenjia@csrc.ac.cn}

\date{}
\maketitle
%\tableofcontents
\thispagestyle{empty}

%%%%%%%%%% Page Styles %%%%%%%%%%%
\paperfont
\titleformat*{\section}{\large\bfseries}

%%%%%%%%%% Abstract %%%%%%%%%%
\begin{abstract}
The $L^p$ maximal inequalities for martingales are one of the classical results in the theory of stochastic processes. Here we establish the sharp moderate maximal inequalities for one-dimensional diffusion processes, which include the $L^p$ maximal inequalities as special cases. Moreover, we apply our theory to many specific examples, including the Ornstein-Uhlenbeck (OU) process, Brownian motion with drift, reflected Brownian motion with drift, Cox-Ingersoll-Ross process, radial OU process, and Bessel process. The results are further applied to establish the moderate maximal inequalities for some high-dimensional processes, including the complex OU process and general conformal local martingales.\\

\noindent % 不缩进
\textbf{Keywords}: moderate function, good $\lambda$ inequality, Brownian motion with drift, Ornstein-Uhlenbeck process, Cox-Ingersoll-Ross process, Bessel process, conformal martingale, Burkholder-Davis-Gundy inequality \\

\noindent
\textbf{AMS Subject Classifications}: 60H10, 60J60, 60J65, 60G44, 60E15
\end{abstract}

%%%%%%%%%% Introduction %%%%%%%%%%
\section{Introduction}
The moderate and $L^p$ maximal inequalities for continuous martingales are one of the classical results in probability theory. Let $M = (M_t)_{t\geq 0}$ be a continuous local martingale with vanishing at zero. The Burkholder-Davis-Gundy (BDG) inequality \cite[Chapter IV, Exercise 4.25]{revuz1999continuous} claims that for any moderate function $F$ (see Definition \ref{moderate} below), there exist two constants $c_F,C_F>0$ such that for any stopping time $\tau$ of $M$,
\begin{equation*}
c_F\Enum F([M,M]_\tau) \leq \Enum\left[\sup_{0\leq t\leq\tau}F(M_t)\right] \leq C_FF([M,M]_\tau).
\end{equation*}
where $[M,M] = ([M,M]_t)_{t\geq 0}$ is the quadratic variation process of $M$. This type of inequalities is referred to as moderate maximal inequalities since it holds for any moderate function. Since $F(x) = x^p$ is a moderate function for any $p>0$, there exist two constants $c_p,C_p>0$ such that for any stopping time $\tau$ of $M$,
\begin{equation*}
c_p\Enum[M]^{p/2}_\tau \leq \Enum\left[\sup_{0\leq t\leq\tau}|M_t|^p\right] \leq C_p\Enum[M]^{p/2}_\tau,
\end{equation*}
which gives the BDG-type $L^p$ maximal inequalities.

Over the past two decades, significant progress has been made in the $L^p$ maximal inequalities for diffusion processes \cite{dubins1994optimal, graversen1998maximal, graversen1998optimal, graversen2000maximal, peskir2001bounding, botnikov2006davis, lyulko2014sharp, yan2004ratio, yan2004maximal, yan2005lp, yan2005lpestimates, chen2017identification, shen2019some, jia2020moderate} and continuous-time Markov chains \cite{jia2019sharp}. In particular, Peskir \cite{peskir2001bounding} have proved the $L^1$ maximal inequalities for a wide class of one-dimensional diffusions using the Lenglart domination principle. Subsequently, the $L^1$ maximal inequalities have been generalized to the $L^p$ case for some special diffusions and special ranges of $p$ \cite{yan2004ratio, yan2004maximal, yan2005lp, yan2005lpestimates, chen2017identification, shen2019some}. However, thus far, very few results have been obtained about the moderate maximal inequalities for diffusions \cite{jia2020moderate}, which include the $L^p$ maximal inequalities as special cases. This is because the majority of existing results based their proof on the application of the Lenglart domination principle, which performs well in the $L^p$ case but fails in the more general moderate case. Recently, Jia and Zhao \cite{jia2020moderate} have established the moderate maximal inequalities for the Ornstein-Uhlenbeck (OU) process. Specifically, let $X = (X_t)_{t\geq 0}$ be an OU process solving the stochastic differential equation
\begin{equation}\label{OU}
\dif X_t = -\alpha X_t\dif t+\dif B_t,\;\;\;X_0 = 0,
\end{equation}
where $\alpha>0$. Then for any moderate function $F$, there exist two constants $c_{\alpha,F},C_{\alpha,F}>0$ such that for any stopping time of $X$,
\begin{equation}\label{eqou}
c_{\alpha,F}\Enum F\bigl(\log^{1/2}(1+\alpha\tau)\bigr)\leq \Enum F(X^*_\tau)\leq C_{\alpha,F}\Enum F\bigl(\log^{1/2}(1+\alpha\tau)\bigr).
\end{equation}
The aim of the present paper is to generalize the above result and establish the moderate maximal inequalities for a wide class of one-dimensional diffusions and even higher-dimensional processes. Our method is based on the ``good $\lambda$ inequality"  introduced by Burkholder \cite{burkholder1973distribution} and is different from the previous method based on the Lenglart domination principle. Once the moderate maximal inequalities have been developed, the $L^p$ maximal inequalities follow naturally for any $p>0$.

The structure of this paper is organized as follows. In Section 2, we present the general theory of the moderate and $L^p$ maximal inequalities for one-dimensional diffusions. Section 3 is devoted to the proof of the main theorems. In Sections 3-6, we apply our theory to some specific examples and establish their moderate maximal inequalities; these examples include the OU process, Brownian motion with drift, reflected Brownian motion with drift, Cox-Ingersoll-Ross process, radial OU process, and Bessel process. In Sections 7 and 8, our theory is further applied to establish a novel type of maximal inequalities for some two-dimensional stochastic processes including the complex OU process, complex Brownian motion, and general conformal local martingales.

\section{Moderate maximal inequalities for diffusions}\label{sec2}
Let $X = (X_t)_{t\geq 0}$ be a one-dimensional time-homogeneous diffusion process starting from zero, which is the (weak) solution to the stochastic differential equation (SDE)
\begin{equation}\label{model}
\dif X_t = b(X_t)\dif t+\sigma(X_t)\dif B_t,\;\;\;X_0 = 0,
\end{equation}
where $b:\Rnum\rightarrow\Rnum$ is Borel measurable, $\sigma:\Rnum\rightarrow[0,\infty)$ is locally bounded, and $B = (B_t)_{t\geq 0}$ is a standard Brownian motion defined on some filtered probability space $(\Omega,\mathcal{F},\{\mathcal{F}_t\},P)$ satisfying the usual conditions. Let $X^*$ denote the maximum process of $\abs{X}$ defined by
\begin{equation*}
X^*_t=\sup_{0\leq s\leq t} \abs{X_s}.
\end{equation*}
Recall that the generator of $X$ is defined by
\begin{equation*}
\mathcal{L} = b(x)\frac{d}{dx}+\frac{1}{2}\sigma^2(x)\frac{d^2}{dx^2}.
\end{equation*}

Let $\mathbb{R}_+ = [0,\infty)$. Before stating our main results, we recall the following definition of moderate functions \cite[Page 164]{revuz1999continuous}.

\begin{definition}\label{moderate}
A function $F: \mathbb{R}_+ \rightarrow \mathbb{R}_+$ is called \emph{moderate} if\\
(a) it is a continuous increasing function vanishing at zero,\\
(b) there exists $\beta>1$ such that
\begin{equation}\label{requirement}
\sup_{x>0}\frac{F(\beta x)}{F(x)} <\infty.
\end{equation}
\end{definition}

In the above equation, we stipulate that $0/0 = 1$ and $x/0 = \infty$ for any $x>0$.
It is easy to see that if $F$ is moderate, then \eqref{requirement} holds for any $\beta\geq 1$ \cite[Page 164]{revuz1999continuous}. In particular, $F(x) = x^p$ is a moderate function for any $p>0$.

We next introduce the concept of controllable processes, which extends the definition given in \cite[Definition 3.2]{jia2019sharp}.

\begin{definition}\label{outercontrollable}
The process $X$ is called \emph{controllable} if there exist constants $\gamma,C>0$ and $\beta>1$ such that for any $t\geq 0$ and $\lambda>0$,
\begin{equation}\label{outereq}
\sup_{\abs{x}=\lambda}\Pro_x\bigl(X^*_t\geq\beta\lambda\bigr)\leq C\Pro_0\bigl(X^*_t\geq\gamma\lambda\bigr),
\end{equation}
where $\Pro_x(\cdot) = \Pro(\cdot|X_0 = x)$.
\end{definition}

The following two theorems, whose proof can be found in Section \ref{proof}, give the upper and lower bounds of the moderate maximal inequalities for diffusions.

\begin{theorem}\label{upperbound}
Let $g:\mathbb{R}_+\to \mathbb{R}_+$ be a strictly increasing continuous function with $g(0) = 0$. Suppose that $X$ is controllable and there exist constants $p>0$ and $C_p>0$ such that the following $L^p$ maximal inequality holds for any $t\geq 0$:
\begin{equation}\label{eqlp}
\Enum (X^{*}_t)^p\leq C_p\Enum (g(t))^p.
\end{equation}
Then for any moderate function $F$, there exists a constant $C_F>0$ such that for any stopping time $\tau$ wth respect to the filtration $\{\mathcal{F}_t\}$,
\begin{equation}\label{uppereq}
\Enum F\bigl(X^*_\tau\bigr)\leq C_F\Enum F\bigl(g(\tau)\bigr).
\end{equation}
\end{theorem}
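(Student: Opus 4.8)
The plan is to reduce the moderate inequality \eqref{uppereq} to Burkholder's \emph{good $\lambda$ inequality}, the controllability hypothesis and the strong Markov property being used to manufacture the required small multiplicative constant. Concretely, with $\beta>1$ the constant from Definition~\ref{outercontrollable}, I would establish that there is a function $\varepsilon$ with $\varepsilon(\delta)\to 0$ as $\delta\to 0$ such that
\begin{equation}\label{gl}
\Pro\bigl(X^*_\tau\ge\beta\lambda,\ g(\tau)\le\delta\lambda\bigr)\ \le\ \varepsilon(\delta)\,\Pro\bigl(X^*_\tau\ge\lambda\bigr)\qquad\text{for all }\lambda>0,\ \delta\in(0,1),
\end{equation}
and then run the usual good-$\lambda$ argument. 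The latter is routine: put $K:=\sup_{x>0}F(\beta x)/F(x)$, which is finite because $F$ is moderate; from \eqref{gl} and $\{X^*_\tau\ge\beta\lambda\}\subseteq\{X^*_\tau\ge\beta\lambda,\,g(\tau)\le\delta\lambda\}\cup\{g(\tau)>\delta\lambda\}$ one gets $\Pro(X^*_\tau\ge\beta\lambda)\le\varepsilon(\delta)\Pro(X^*_\tau\ge\lambda)+\Pro(g(\tau)>\delta\lambda)$; integrating against the Stieltjes measure $dF$ and using $\Enum F(Z)=\int_0^\infty\Pro(Z>\lambda)\,dF(\lambda)$ gives $\Enum F(X^*_\tau/\beta)\le\varepsilon(\delta)\Enum F(X^*_\tau)+\Enum F(g(\tau)/\delta)$; finally $F(x)\le KF(x/\beta)$ and $F(y/\delta)\le K^{m}F(y)$ (with $\beta^{m}\ge 1/\delta$) yield, once $\delta$ is chosen small enough that $K\varepsilon(\delta)<1$, the bound $\Enum F(X^*_\tau)\le\frac{K^{m+1}}{1-K\varepsilon(\delta)}\,\Enum F(g(\tau))$. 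Since $\Enum F(X^*_\tau)$ might a priori be infinite, I would first carry this out with $X^*_\tau$ replaced by $X^*_\tau\wedge N$, under which \eqref{gl} is clearly preserved, and then let $N\to\infty$ by monotone convergence.

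The substance of the proof is \eqref{gl}. Fix $\lambda,\delta$, set $u:=g^{-1}(\delta\lambda)$ — a \emph{deterministic} time — and let $\rho:=\inf\{t\ge 0:|X_t|=\lambda\}$. On the event $G:=\{X^*_\tau\ge\beta\lambda,\ g(\tau)\le\delta\lambda\}$: by continuity of $X$ (and $X_0=0$) the path visits level $\lambda$ before level $\beta\lambda$, so $\rho\le\tau$; and $g(\tau)\le\delta\lambda$ with $g$ strictly increasing forces $\tau\le u$, so level $\beta\lambda$ is attained during the window $[\rho,\rho+u]$. Hence $G\subseteq\{\rho\le\tau\}\cap H$ with $H:=\{\sup_{0\le r\le u}|X_{\rho+r}|\ge\beta\lambda\}$. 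Since $\{\rho\le\tau\}\in\mathcal{F}_\rho$ and $|X_\rho|=\lambda$ on $\{\rho<\infty\}$, the strong Markov property at $\rho$ gives
\begin{equation*}
\Pro(G)\ \le\ \Enum\!\left[\mathbf{1}_{\{\rho\le\tau\}}\,\Pro_{X_\rho}\!\bigl(X^*_u\ge\beta\lambda\bigr)\right]\ \le\ \Bigl(\sup_{|x|=\lambda}\Pro_x\bigl(X^*_u\ge\beta\lambda\bigr)\Bigr)\,\Pro(\rho\le\tau).
\end{equation*}
Controllability bounds the supremum by $C\,\Pro_0(X^*_u\ge\gamma\lambda)$; the $L^p$ inequality \eqref{eqlp} at the \emph{deterministic} time $u$ gives $\Enum(X^*_u)^p\le C_p(g(u))^p=C_p(\delta\lambda)^p$, so Markov's inequality yields $\Pro_0(X^*_u\ge\gamma\lambda)\le C_p(\delta/\gamma)^p$; and $\{\rho\le\tau\}=\{X^*_\tau\ge\lambda\}$. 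Combining these proves \eqref{gl} with $\varepsilon(\delta)=C\,C_p\,\gamma^{-p}\,\delta^{p}$, which tends to $0$.

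I expect the main obstacle to be the careful execution around \eqref{gl}: one must keep the events in the correct $\sigma$-algebras so that the strong Markov property at the hitting time $\rho$ applies cleanly (in particular $\{\rho\le\tau\}\in\mathcal{F}_\rho$), and — the real point — one must invoke the $L^p$ bound at the \emph{deterministic} time $u=g^{-1}(\delta\lambda)$, since it is precisely the identity $g(u)=\delta\lambda$ that converts the $L^p$ estimate into the small factor $\delta^{p}$ needed by the good-$\lambda$ lemma; controllability is exactly what allows the excursion after time $\rho$ to be restarted from an arbitrary point of $\{|x|=\lambda\}$ rather than from a single fixed one. Minor technical points are the a priori finiteness of $\Enum F(X^*_\tau)$ (handled by the truncation and monotone convergence above) and, if $g$ fails to be onto $\mathbb{R}_+$, the replacement of $g^{-1}(\delta\lambda)$ by any $u$ with $g(u)\le\delta\lambda$ together with a limiting argument.
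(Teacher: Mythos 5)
Your proposal is correct and follows essentially the same route as the paper: the same good-$\lambda$ inequality is established via the hitting time of level $\lambda$, the strong Markov property, controllability, and the $L^p$ bound applied at the deterministic time $g^{-1}(\delta\lambda)$ combined with Chebyshev's inequality, yielding the same small factor $C C_p \gamma^{-p}\delta^p$. The only difference is that you spell out the passage from the good-$\lambda$ inequality to the moderate maximal inequality (including the truncation for a priori finiteness), whereas the paper delegates this step to a cited lemma of Revuz and Yor.
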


\begin{theorem}\label{lowerbound}
Let $g:\mathbb{R}_+\to \mathbb{R}_+$ be a strictly increasing continuous function with $g(0) = 0$ and let $f:\Rnum\rightarrow\Rnum$ be a $C^2$ function satisfying $\mathcal{L}f = 1$ and $f(0)=0$. Suppose that exists $\beta >1$ such that the following condition holds:
\begin{equation}\label{ieq_lo}
\lim_{\delta\downarrow 0}
\sup_{\substack{\lambda>0,\atop \abs{x}<\delta\lambda, \abs{y}<\delta\lambda}}\frac{f(y)-f(x)}
{g^{-1}(\beta\lambda)-g^{-1}(\lambda)} = 0.
\end{equation}
Then for any moderate function $F$, there exists a constant $c_F>0$ such that for any stopping time $\tau$ wth respect to the filtration $\{\mathcal{F}_t\}$,
\begin{equation}
\Enum F\bigl(X^*_\tau\bigr) \geq c_F\Enum F\bigl(g(\tau)\bigr).
\label{lowereq}
\end{equation}
\end{theorem}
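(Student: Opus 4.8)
The plan is to reduce \eqref{lowereq} to a ``good $\lambda$ inequality'' relating the two nonnegative random variables $g(\tau)$ and $X^*_\tau$, and then to invoke Burkholder's lemma on good $\lambda$ inequalities \cite{burkholder1973distribution} (see also \cite[Chapter IV]{revuz1999continuous}). Fix the constant $\beta>1$ from \eqref{ieq_lo}. I would show that there is a function $\delta\mapsto\eta(\delta)$ with $\eta(\delta)\downarrow 0$ as $\delta\downarrow 0$ such that, for every stopping time $\tau$, every $\lambda>0$ and every $\delta\in(0,1)$,
\begin{equation*}
\Pro\bigl(g(\tau)>\beta\lambda,\ X^*_\tau\leq\delta\lambda\bigr)\ \leq\ \eta(\delta)\,\Pro\bigl(g(\tau)>\lambda\bigr).
\end{equation*}
Granting this, fix a moderate $F$; then $c_F^{*}:=\sup_{x>0}F(\beta x)/F(x)<\infty$ (recall that \eqref{requirement} then holds for every $\beta\geq 1$), and the good $\lambda$ lemma yields, once $\delta$ is chosen so small that $c_F^{*}\eta(\delta)<1$, that $\Enum F(g(\tau))\leq C_F\,\Enum F(X^*_\tau)$ with $C_F$ depending only on $F$ and $\delta$; this is \eqref{lowereq} with $c_F=1/C_F$. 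The a priori finiteness required to run that lemma is obtained by first applying the good $\lambda$ inequality to the bounded stopping time $\tau\wedge n$ (so that $\Enum F(g(\tau\wedge n))\leq F(g(n))<\infty$) and then letting $n\to\infty$, using that $g$ and $F$ are continuous and increasing together with monotone convergence.

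The core ingredient is an exit-time estimate, and this is where $f$ and the identity $\mathcal{L}f=1$ enter. For $r>0$ put $\theta_r:=\inf\{t\geq 0:\abs{X_t}>r\}$. By It\^o's formula, $f(X_t)-f(X_0)-t=\int_0^t f'(X_s)\sigma(X_s)\,\dif B_s$; since $f'$ and $\sigma$ are bounded on $[-r,r]$, the stopped process $\bigl(f(X_{t\wedge\theta_r})-f(X_0)-(t\wedge\theta_r)\bigr)_{t\geq 0}$ is a true martingale, and $\abs{X_{t\wedge\theta_r}}\leq r$ by path continuity. Hence for $\abs{x}\leq r$ one gets $\Enum_x[\theta_r\wedge t]=\Enum_x[f(X_{\theta_r\wedge t})]-f(x)\leq\sup_{\abs{y}\leq r}f(y)-\inf_{\abs{z}\leq r}f(z)$, and taking $t=T$ and using $\theta_r\wedge T\geq T\mathbf{1}_{\{\theta_r\geq T\}}$,
\begin{equation*}
\sup_{\abs{x}\leq r}\Pro_x\bigl(\theta_r\geq T\bigr)\ \leq\ \frac{1}{T}\ \sup_{\abs{y}\leq r,\ \abs{z}\leq r}\bigl(f(y)-f(z)\bigr),\qquad T>0 .
\end{equation*}
Taking $r=\delta\lambda$ and $T=g^{-1}(\beta\lambda)-g^{-1}(\lambda)$ (positive and finite because $g$ is strictly increasing and continuous with $g(0)=0$), and setting
\begin{equation*}
\eta(\delta)\ :=\ \sup_{\lambda>0}\ \frac{1}{g^{-1}(\beta\lambda)-g^{-1}(\lambda)}\ \sup_{\abs{y}\leq\delta\lambda,\ \abs{z}\leq\delta\lambda}\bigl(f(y)-f(z)\bigr),
\end{equation*}
condition \eqref{ieq_lo} is precisely the assertion that $\eta(\delta)\downarrow 0$ as $\delta\downarrow 0$ (the passage from closed balls here to the open balls in \eqref{ieq_lo} costs nothing: dominate the closed-ball supremum at radius $\delta\lambda$ by the open-ball supremum at radius $\delta'\lambda$ for any $\delta'>\delta$, by continuity of $f$).

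Finally I would pass from this estimate to the good $\lambda$ inequality via the Markov property at a deterministic time. Fix $\tau,\lambda,\delta$; if $\beta\lambda$ lies outside the range of $g$ the left-hand side vanishes, so assume it does not and write $s:=g^{-1}(\lambda)<u:=g^{-1}(\beta\lambda)$. On the event $\{g(\tau)>\beta\lambda,\ X^*_\tau\leq\delta\lambda\}$ we have $\tau>u>s$, hence $\abs{X_v}\leq\delta\lambda$ for all $v\leq u$; thus this event is contained in $\{\tau>s\}\cap\{X^*_s\leq\delta\lambda\}\cap\{\sup_{s\leq v\leq u}\abs{X_v}\leq\delta\lambda\}$, of which the first two members lie in $\mathcal{F}_s$. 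Conditioning on $\mathcal{F}_s$ and using the Markov property at $s$, on $\{X^*_s\leq\delta\lambda\}$ (where $\abs{X_s}\leq\delta\lambda$) one has $\Pro\bigl(\sup_{s\leq v\leq u}\abs{X_v}\leq\delta\lambda\mid\mathcal{F}_s\bigr)\leq\Pro_{X_s}(\theta_{\delta\lambda}\geq u-s)\leq\eta(\delta)$, using $\{\abs{X_v}\leq\delta\lambda\ \forall\,v\leq u-s\}\subseteq\{\theta_{\delta\lambda}\geq u-s\}$ and the displayed bound at $T=u-s=g^{-1}(\beta\lambda)-g^{-1}(\lambda)$. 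Taking expectations and using $\{\tau>s\}=\{g(\tau)>\lambda\}$ then gives the good $\lambda$ inequality. The hard part I expect to be the bookkeeping in this last step: one must handle the boundary behaviour of $\theta_r$ (strict versus non-strict exit of the level, and the fact $\abs{X_{\theta_r}}=r$ on $\{\theta_r<\infty\}$) so that the path event is genuinely dominated by $\{\theta_{\delta\lambda}\geq u-s\}$ after the time shift, and one must check that the smallness threshold in Burkholder's lemma --- namely $c_F^{*}\eta(\delta)<1$ --- is independent of $\delta$, so that shrinking $\delta$ does not create circularity in the constants; the remaining truncation and monotone convergence are routine.
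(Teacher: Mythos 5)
Your proposal is correct and follows essentially the same route as the paper's proof: a good $\lambda$ inequality for the pair $(g(\tau),X^*_\tau)$ obtained by conditioning on $\mathcal{F}_{g^{-1}(\lambda)}$ via the Markov property, an exit-time bound $\Enum_x[\theta_{\delta\lambda}\wedge T]\leq\sup f-\inf f$ from It\^o's formula and $\mathcal{L}f=1$, Chebyshev's inequality to convert this into the smallness function $\phi(\delta)$ of condition \eqref{ieq_lo}, and finally Lemma \ref{Lemma3.3}. The minor technical points you flag (open versus closed balls, strict versus non-strict exit, a priori finiteness via $\tau\wedge n$) are all handled correctly and do not change the argument.
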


Combining the above two theorems, we obtain the following corollary, which is the main result of this paper.
\begin{corollary}\label{cor2}
Suppose that the conditions of both Theorems \ref{upperbound} and \ref{lowerbound} are satisfied. Then for any moderate function $F$, there exist two constants $c_F,C_F>0$ such that the following moderate maximal inequalities hold for any stopping time $\tau$ wth respect to the filtration $\{\mathcal{F}_t\}$:
\begin{equation*}
c_F\Enum F\bigl(g(\tau)\bigr) \leq \Enum F\bigl(X^*_\tau\bigr) \leq C_F\Enum F\bigl(g(\tau)\bigr).
\end{equation*}
In particular, for any $p>0$, there exist two constants $c_p,C_p>0$ such that the following $L^p$ maximal inequalities hold for any stopping time $\tau$ wth respect to the filtration $\{\mathcal{F}_t\}$:
\begin{equation*}
c_p\Enum (g(\tau))^p \leq \Enum (X^*_\tau)^p \leq C_p\Enum (g(\tau))^p.
\end{equation*}
\end{corollary}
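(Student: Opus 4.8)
The plan is essentially bookkeeping: Corollary \ref{cor2} is the conjunction of Theorems \ref{upperbound} and \ref{lowerbound}, so the only real task is to verify that no hypothesis is lost along the way and that the $L^p$ statement is a genuine special case of the moderate one. First I would unpack the phrase ``the conditions of both Theorems \ref{upperbound} and \ref{lowerbound} are satisfied'': this supplies exactly what each theorem requires, namely the controllability of $X$ together with constants $p,C_p>0$ for which the $L^p$ bound \eqref{eqlp} holds (the input to Theorem \ref{upperbound}), and a $C^2$ function $f$ with $\mathcal{L}f=1$ and $f(0)=0$ together with the vanishing condition \eqref{ieq_lo} (the input to Theorem \ref{lowerbound}). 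Fixing an arbitrary moderate function $F$, Theorem \ref{upperbound} then produces a constant $C_F>0$ with $\Enum F(X^*_\tau)\le C_F\Enum F(g(\tau))$ for every stopping time $\tau$ of $\{\mathcal{F}_t\}$, and Theorem \ref{lowerbound} produces a constant $c_F>0$ with $\Enum F(X^*_\tau)\ge c_F\Enum F(g(\tau))$ for the same class of $\tau$. Chaining the two estimates gives the asserted two-sided moderate maximal inequality, with the same constants $c_F,C_F$.

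For the $L^p$ assertion I would invoke the observation recorded just after Definition \ref{moderate} that $F(x)=x^p$ is moderate for every $p>0$; substituting this particular $F$ into the moderate inequality already established yields constants $c_p:=c_F$ and $C_p:=C_F$ with $c_p\Enum (g(\tau))^p\le \Enum (X^*_\tau)^p\le C_p\Enum (g(\tau))^p$, which is precisely the claim. There is no genuine obstacle in this argument, so it is more a matter of stating what is already proved than of proving something new. The one point I would flag explicitly to preempt confusion is the apparent circularity: Theorem \ref{upperbound} itself presupposes an $L^p$ bound, yet its conclusion (via the corollary) delivers $L^p$ bounds for all $p>0$. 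This is not circular because the hypothesis \eqref{eqlp} is needed for a single exponent $p$ only, and the passage through general moderate functions then upgrades it to every exponent simultaneously. Finally, I would note that all constants depend on $F$ (equivalently on $p$) and on the structural data $b,\sigma,g$, but not on the stopping time $\tau$ — a dependence already built into the statements of the two theorems — so the corollary follows immediately.
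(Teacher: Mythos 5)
Your proposal is correct and matches the paper's treatment: the corollary is stated there as an immediate combination of Theorems \ref{upperbound} and \ref{lowerbound}, with the $L^p$ case following by taking $F(x)=x^p$, exactly as you argue. Your extra remark on the non-circularity of assuming \eqref{eqlp} for a single exponent is a sensible clarification but does not change the argument.
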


\section{Proof of the main theorems}\label{proof}
Here we shall give the proof of Theorems \ref{upperbound} and \ref{lowerbound} using the following classical results, whose proof can be found in \cite[Chapter IV, Lemma 4.9]{revuz1999continuous}.

\begin{lemma}\label{Lemma3.3}
Let $X$ and $Y$ be two nonnegative random variables. Let ${\phi\colon{\Rnum}_+\rightarrow{\Rnum}_+} $ be a function satisfying ${\phi(\delta)\rightarrow 0}$ as ${\delta\rightarrow 0}$. Suppose that there exists $\beta>1$ such that the following good $\lambda$ inequality holds for any $\delta,\lambda>0$:
\begin{equation*}\label{eq3.3.1}
\Pro(X\geq \beta\lambda, Y < \delta\lambda) \leq \phi(\delta)\Pro(X \geq \lambda).
\end{equation*}
Then for any moderate function $F$, there exists a positive constant $C$ depending on $F$, $\beta$, and $\phi$ such that
\begin{equation*}\label{eq3.3.2}
\Enum F(X) \leq C\Enum F(Y).
\end{equation*}
\end{lemma}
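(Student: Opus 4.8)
The plan is to prove the lemma by the classical layer-cake (distribution function) representation combined with the moderate growth of $F$, which is the analytic heart of Burkholder's good $\lambda$ method. Since $F$ is continuous, increasing, and vanishes at $0$, its Lebesgue--Stieltjes measure $\dif F$ is non-atomic, and for any nonnegative random variable $Z$ one has $\Enum F(Z)=\int_0^\infty \Pro(Z>\lambda)\,\dif F(\lambda)$, with $\Pro(Z>\lambda)$ and $\Pro(Z\geq\lambda)$ interchangeable under $\dif F$. To avoid manipulating possibly infinite quantities, I would first work with the truncations $\Enum F(Z\wedge N)=\int_0^N \Pro(Z>\lambda)\,\dif F(\lambda)$, which are finite because $F(N)<\infty$, and pass to the limit $N\to\infty$ only at the very end by monotone convergence.

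The first substantive step is to turn the good $\lambda$ inequality into a pointwise tail bound for $X$. Splitting $\{X\geq\beta\lambda\}$ according to whether $Y<\delta\lambda$ or $Y\geq\delta\lambda$ gives, for all $\delta,\lambda>0$,
\[
\Pro(X\geq\beta\lambda)\leq \Pro(X\geq\beta\lambda,\,Y<\delta\lambda)+\Pro(Y\geq\delta\lambda)\leq \phi(\delta)\Pro(X\geq\lambda)+\Pro(Y\geq\delta\lambda).
\]
Integrating this against $\dif F(\lambda)$ over $(0,N]$ and using the scalings $u=\beta\lambda$ and $u=\delta\lambda$ inside the integrals, I would identify the three terms with truncated moments: the left side equals $\Enum F\bigl((X/\beta)\wedge N\bigr)=\Enum F\bigl((X\wedge\beta N)/\beta\bigr)$, the first right-hand term is $\phi(\delta)\Enum F(X\wedge N)$, and the second is bounded by $\Enum F(Y/\delta)$.

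The key mechanism is then the moderate property \eqref{requirement}, which (as noted after Definition \ref{moderate}) yields finite constants $K_\gamma:=\sup_{x>0}F(\gamma x)/F(x)$ for every $\gamma\geq 1$. Applying $F(\beta z)\leq K_\beta F(z)$ with $z=(X\wedge\beta N)/\beta$ gives the lower bound $\Enum F\bigl((X\wedge\beta N)/\beta\bigr)\geq K_\beta^{-1}\Enum F(X\wedge\beta N)$, while $\Enum F(Y/\delta)\leq K_{1/\delta}\Enum F(Y)$ for $\delta\leq 1$. Since $F(X\wedge N)\leq F(X\wedge\beta N)$, collecting everything produces
\[
K_\beta^{-1}\Enum F(X\wedge\beta N)\leq \phi(\delta)\Enum F(X\wedge\beta N)+K_{1/\delta}\Enum F(Y).
\]
I would then fix $\delta$ small enough that $\phi(\delta)\leq \tfrac{1}{2}K_\beta^{-1}$, possible since $\phi(\delta)\to 0$; the coefficient $K_\beta^{-1}-\phi(\delta)\geq \tfrac{1}{2}K_\beta^{-1}$ is strictly positive, and because $\Enum F(X\wedge\beta N)$ is finite it may be subtracted, giving $\Enum F(X\wedge\beta N)\leq 2K_\beta K_{1/\delta}\Enum F(Y)$. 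Letting $N\to\infty$ and invoking monotone convergence yields $\Enum F(X)\leq C\Enum F(Y)$ with $C=2K_\beta K_{1/\delta}$, depending only on $F$, $\beta$, and $\phi$.

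The main obstacle I anticipate is the bookkeeping around finiteness and the scaling of the truncation level: one cannot subtract $\phi(\delta)\Enum F(X)$ without knowing a priori that $\Enum F(X)<\infty$, and the change of variables $u=\beta\lambda$ shifts the natural truncation from $N$ to $\beta N$. The truncation-then-monotone-convergence device resolves both difficulties at once, provided the scalings and the two moderate constants $K_\beta$ and $K_{1/\delta}$ are tracked consistently; the remaining manipulations are routine.
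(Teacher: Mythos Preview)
Your proof is correct and is precisely the standard good-$\lambda$ argument; the paper itself does not give a proof of this lemma but simply cites \cite[Chapter IV, Lemma 4.9]{revuz1999continuous}, whose proof is essentially the one you have written. Your careful handling of the truncation $X\wedge\beta N$ to guarantee finiteness before subtracting is exactly the right way to make the argument rigorous.
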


We are now in a position to prove Theorem \ref{upperbound}.

\begin{proof}[\textbf{Proof of Theorem \ref{upperbound}}]\label{proof_upper}
For any $x>0$, let $\tau_x = \inf\{t\geq 0: \abs{X_t}\geq x\}$. Since $X$ is controllable, there exists $\beta>1$ and $\gamma, C,\lambda>0$ such that for any $s>0$,
\begin{equation*}
\sup_{\abs{x}=\lambda}\Pro_x(X^*_s\geq \beta\lambda) \leq C\Pro_0(X^*_s\geq \gamma\lambda).
\end{equation*}
Then for any $\delta>0$, it is easy to check that
\begin{equation*}
\Pro_0\bigl(X^*_\tau\geq \beta\lambda,g(\tau)<\delta\lambda\bigr)
\leq \Pro_0(X^*_{s\vee{\tau_\lambda}}\geq \beta\lambda, \tau >\tau_\lambda),
\end{equation*}
where $s=g^{-1}(\delta\lambda)$. By the strong Markov property of $X$, we have
\begin{align*}
\Pro_0\bigl(X^*_\tau \geq \beta\lambda,g(\tau)<\delta\lambda\bigr)
&\leq \Enum_0\bigl[1_{\{\tau >\tau_\lambda\}}\Pro_0(X^*_{\tau_\lambda+s}\geq \beta\lambda|\mathscr{F}_{\tau_{\lambda}})\bigr]\\
&= \Enum_0\bigl[1_{\{\tau>\tau_\lambda\}}
\Pro_{X_{\tau_{\lambda}}}(X^*_s\geq \beta\lambda)\bigr]\\
&\leq \sup_{\abs{x}=\lambda}\Pro_x(X^*_s\geq \beta\lambda)\Pro_0(\tau >\tau_\lambda)
\leq C\Pro_0(X^*_s\geq \gamma\lambda)\Pro_0(\tau >\tau_\lambda).
\end{align*}
It then follows from Chebyshev's inequality that
\begin{equation*}
\begin{split}
\Pro_0(X^*_s\geq \gamma\lambda)
&\leq \frac{\Enum (X^{*}_s)^p}{(\gamma\lambda)^p} \leq \frac{C_p(g(s))^p}{(\gamma\lambda)^p}
= \frac{C_p(\delta\lambda)^p}{(\gamma\lambda)^p} = \frac{C_p\delta^p}{\gamma^p},
\end{split}
\end{equation*}
where $C_p$ is the constant in \eqref{eqlp}. This shows that
\begin{equation*}
\Pro_0\bigl(X^*_\tau\geq \beta\lambda, g(\tau)< \delta\lambda\bigr)
\leq \frac{CC_p\delta^p}{\gamma^p}\Pro_0(X^*_\tau\geq \lambda).
\end{equation*}
The desired result then follows from Lemma \ref{Lemma3.3}.
\end{proof}

%\begin{remark}\label{rem1}
%From the proof of the above theorem, if $X$ is a nonnegative process, then \eqref{outereq} can be weakened as
%\begin{equation*}
%\Pro_{\lambda}\bigl(X^*_t\geq\beta\lambda\bigr)\leq C\Pro_0\bigl(X^*_t\geq\gamma\lambda\bigr).
%\end{equation*}
%\end{remark}

We next give the proof of Theorem \ref{lowerbound}.

\begin{proof}[\textbf{Proof of Theorem \ref{lowerbound}}]\label{proof_lower}
For any $\delta>0$, it is easy to check that
\begin{equation*}
\Pro_0\bigl(g(\tau)\geq \beta\lambda, X^*_\tau <\delta\lambda\bigr)\leq \Pro_0(\tau \geq r, X^*_s <\delta\lambda),
\end{equation*}
where $r=g^{-1}(\lambda)$ and $s=g^{-1}(\beta\lambda)$. By the Markov property of $X$, we have
\begin{align*}
\Pro_0(g(\tau)\geq \beta\lambda, X^*_\tau <\delta\lambda)&\leq\Enum_0\bigl[1_{\{\tau\geq r\}}\Pro_0(X^*_s <\delta\lambda\vert \mathscr{F}_r)\bigr]\\
&\leq\Enum_0\bigl[1_{\{\tau\geq r\}}\Pro_{X_{r}}(X^*_{s-r} <\delta\lambda)\bigr]\\
&\leq\sup_{\abs{x}<\delta\lambda}\Pro_{x}(X^*_{s-r}<\delta\lambda)\Pro_0(\tau\geq r).
\end{align*}
For any $x>0$, let $\tau_x = \inf\{t\geq 0: \abs{X_t}\geq x\}$. Applying It\^{o}'s formula, we obtain
\begin{equation*}
f\bigl(X_{\tau_{\delta\lambda}\wedge t}\bigr)
= f\bigl(X_0\bigr)+\int_0^{\tau_{\delta\lambda}\wedge t}\mathcal{L} f\bigl(X_s^x\bigr)\dif s
+\int_0^{\tau_{\delta\lambda}\wedge t} f^{\prime}\bigl(X_s\bigr)\sigma\bigl(X_s\bigr)\dif B_s.
\end{equation*}
Since $f$ is $C^2$ and $\sigma$ is locally bounded, the last term in the above equation is a martingale. Taking expectation on both sides of the above equation and using the fact that $\mathcal{L}f = 1$ yield
\begin{equation*}
\Enum_xf\bigl(X_{\tau_{\delta\lambda}\wedge t}\bigr) = f(x)+\Enum_x\tau_{\delta\lambda}\wedge t.
\end{equation*}
For any $\abs{x} <\delta\lambda$, letting $t\rightarrow\infty$ in the above equation yields
\begin{equation*}
\Enum_x\tau_{\delta\lambda}
=\lim_{t\rightarrow\infty} \Enum f\bigl(X_{\tau_{\delta\lambda}\wedge t}\bigr)-f(x)
\leq \sup_{\abs{y}<\delta\lambda}f(y)- f(x).
\end{equation*}
It then follows from from Chebyshev's inequality that
\begin{equation*}
\begin{split}
\sup_{\abs{x}<\delta\lambda}\Pro_{x}(X^*_{s-r} < \delta\lambda)
&= \sup_{\abs{x}<\delta\lambda}\Pro_{x}(\tau_{\delta\lambda}>s-r)
\leq \sup_{\abs{x}<\delta\lambda}\frac{\Enum_x\tau_{\delta\lambda}}{s-r} \\
&\leq \sup_{\substack{\lambda>0,\\
\abs{x}<\delta\lambda,\abs{y}<\delta\lambda}}\frac{f(y)- f(x)}{g^{-1}(\beta\lambda)-g^{-1}(\lambda)} := \phi(\delta).
\end{split}
\end{equation*}
Thus we have
\begin{equation*}
\Pro_0\bigl(g(\tau)\geq 2\lambda, X^*_\tau <\delta\lambda\bigr)
\leq \phi(\delta)\Pro_0\bigl(g(\tau)\geq \lambda\bigr).
\end{equation*}
The desired result then follows from the condition \eqref{ieq_lo} and Lemma \ref{Lemma3.3}.
\end{proof}

\begin{remark}\label{rem2}
Actually, the condition \eqref{ieq_lo} can be weakened for some specific processes. First, from the proof of Theorem \ref{lowerbound}, if $X$ is a nonnegative process, then the condition \eqref{ieq_lo} can be weakened as
\begin{equation*}
\lim_{\delta\downarrow 0}
\sup_{\substack{\lambda>0,\atop 0\leq x, y<\delta\lambda}}\frac{f(y)-f(x)}
{g^{-1}(\beta\lambda)-g^{-1}(\lambda)} = 0.
\end{equation*}
In addition, if $X$ is regular, i.e. $\Pro_x(T_y<\infty)>0$ for any $x, y\in \mathbb{R}$, where $T_y:=\inf\{t>0: X_t=y\}$ is the hitting time of $y$ \cite[Page 300]{revuz1999continuous}, then we have $\Enum_x\tau_{\delta\lambda}<\infty$ \cite[Chapter VII, Proposition 3.1]{revuz1999continuous}. From the proof of Theorem \ref{lowerbound}, the condition \eqref{ieq_lo} can be weakened as
\begin{equation*}
\lim_{\delta\downarrow 0}\sup_{\substack{\lambda>0,\\
\abs{x}<\delta\lambda}}\frac{f(\delta\lambda)\vee f(-\delta\lambda)- f(x)}{g^{-1}(\beta\lambda)-g^{-1}(\lambda)}=0.
\end{equation*}
Finally, if the function $f$ in Theorem \ref{lowerbound} is increasing on $[0,\infty)$ and decreasing on $(-\infty, 0]$, then the condition \eqref{ieq_lo} can be weakened as
\begin{equation*}
\lim_{\delta\downarrow 0}\sup_{\lambda>0}\frac{f(\delta\lambda)\vee f(-\delta\lambda)}{g^{-1}(\beta\lambda)-g^{-1}(\lambda)}=0.
\end{equation*}
\end{remark}

\section{Ornstein-Uhlenbeck processes}\label{secou}
The Ornstein-Uhlenbeck (OU) process is one of the most important kinetic models in statistical mechanics \cite{ornstein1930on, chen2020mathematical}. Let $X = (X_t)_{t\geq 0}$ be a one-dimensional OU process starting from zero, which is the unique solution to the SDE
\begin{equation}\label{OU}
\dif X_t = -\alpha X_t\dif t+\dif B_t,\;\;\;X_0 = 0,
\end{equation}
where $\alpha>0$. The moderate maximal inequalities for OU processes have been studied in \cite{jia2020moderate}. Here we revisit the moderate maximal inequalities for OU processes using the results of the present paper.

\begin{theorem}\label{themou}
Let $X = (X_t)_{t\geq 0}$ be the OU process solving \eqref{OU}. Then for any moderate function $F$, there exist two constants $c_{\alpha,F},C_{\alpha,F}>0$ such that for any stopping time of the filtration $\{\mathcal{F}_t\}$,
\begin{equation}\label{eqou}
c_{\alpha,F}\Enum F\bigl(\log^{1/2}(1+\alpha\tau)\bigr)\leq \Enum F(X^*_\tau)\leq C_{\alpha,F}\Enum F\bigl(\log^{1/2}(1+\alpha\tau)\bigr).
\end{equation}
\end{theorem}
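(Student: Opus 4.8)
The plan is to verify the hypotheses of Corollary \ref{cor2} for the OU process \eqref{OU} with $g(t)=\log^{1/2}(1+\alpha t)$. This $g$ is strictly increasing, continuous and vanishes at zero, its inverse is $g^{-1}(u)=\alpha^{-1}(e^{u^2}-1)$, and the generator of $X$ is $\mathcal{L}=-\alpha x\,\frac{d}{dx}+\frac12\frac{d^2}{dx^2}$. It then suffices to check the conditions of Theorems \ref{upperbound} and \ref{lowerbound} with this $g$.

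For the upper bound, I first check that $X$ is \emph{controllable}: under $\Pro_x$ one has the explicit representation $X_t=xe^{-\alpha t}+\int_0^t e^{-\alpha(t-s)}\dif B_s$, and as a process in $t$ the stochastic-integral term is exactly the OU process started at the origin, so under $\Pro_x$ we have $X^*_t\le|x|+Y^*_t$, where $Y^*_t$ has the law of $X^*_t$ under $\Pro_0$; hence for $|x|=\lambda$, $\Pro_x(X^*_t\ge2\lambda)\le\Pro_0(X^*_t\ge\lambda)$, i.e.\ \eqref{outereq} holds with $\beta=2$, $\gamma=1$, $C=1$. Next I need an $L^p$ maximal inequality, and I take $p=1$: for deterministic $t$, $\Enum X^*_t\le C_1\log^{1/2}(1+\alpha t)=C_1 g(t)$, which is the deterministic-time case of the known $L^1$ maximal inequality for the OU process (see e.g.\ \cite{graversen2000maximal,peskir2001bounding}); if a self-contained proof is wanted, use the Dambis--Dubins--Schwarz representation $X^*_t=\sup_{0\le\theta\le\theta(t)}(1+2\alpha\theta)^{-1/2}|W_\theta|$ with $\theta(t)=(e^{2\alpha t}-1)/(2\alpha)$ and $W$ a Brownian motion, and sum the reflection bound $\Pro(\sup_{u\le T}|W_u|\ge a)\le4e^{-a^2/(2T)}$ over the dyadic blocks of $[0,\theta(t)]$ to get $\Pro(X^*_t\ge y)\lesssim t\,e^{-\alpha y^2/2}$ and thence $\Enum X^*_t\lesssim\log^{1/2}(1+\alpha t)$. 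Theorem \ref{upperbound} then gives $\Enum F(X^*_\tau)\le C_F\Enum F(g(\tau))$.

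For the lower bound I must produce a $C^2$ function $f$ with $\mathcal{L}f=1$ and $f(0)=0$; I take the even solution
\begin{equation*}
f(x)=\int_0^x e^{\alpha y^2}\Bigl(\int_0^y 2e^{-\alpha z^2}\dif z\Bigr)\dif y ,
\end{equation*}
which is smooth, satisfies $f(0)=f'(0)=0$ and $\frac12 f''=\alpha x f'+1$ (so $\mathcal{L}f=1$), is nonnegative, even, and increasing on $[0,\infty)$. Since $\int_0^\infty2e^{-\alpha z^2}\dif z=\sqrt{\pi/\alpha}$ and $\int_0^x e^{\alpha y^2}\dif y\sim e^{\alpha x^2}/(2\alpha x)$, we get $f(x)\sim\sqrt{\pi}\,e^{\alpha x^2}/(2\alpha^{3/2}x)$ as $x\to\infty$, and (using $f(x)\sim x^2$ near $0$) $f(x)\le Ax^2e^{\alpha x^2}$ for all $x\ge0$ and some $A>0$. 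Because $f\ge0=f(0)$ and $f$ is even and increasing on $[0,\infty)$, the inner supremum in \eqref{ieq_lo} equals $f(\delta\lambda)$, so with $\beta=2$ condition \eqref{ieq_lo} becomes
\begin{equation*}
\lim_{\delta\downarrow0}\ \sup_{\lambda>0}\ \frac{\alpha\,f(\delta\lambda)}{e^{4\lambda^2}-e^{\lambda^2}}=0 .
\end{equation*}
Using $f(\delta\lambda)\le A\delta^2\lambda^2 e^{\alpha\delta^2\lambda^2}$ together with $e^{4\lambda^2}-e^{\lambda^2}\ge3\lambda^2$ for $0<\lambda\le1$ and $e^{4\lambda^2}-e^{\lambda^2}\ge(1-e^{-3})e^{4\lambda^2}$ for $\lambda\ge1$, one checks that the inner supremum is $\le C_\alpha\delta^2$ once $\delta\le\alpha^{-1/2}$, which tends to $0$. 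Hence Theorem \ref{lowerbound} gives $\Enum F(X^*_\tau)\ge c_F\Enum F(g(\tau))$.

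Combining the two bounds (Corollary \ref{cor2}) yields \eqref{eqou}. The one genuinely technical point is the $L^1$ maximal inequality if one forgoes citing it: the DDS time change turns $X^*_t$ into a weighted supremum of Brownian motion over the exponentially long interval $[0,\theta(t)]$, and the Gaussian tails over its $O(t)$ dyadic blocks must be summed by the tail estimate rather than by sub-additivity of the maximum, so that $\Enum X^*_t$ is of order $\log^{1/2}$ of the number of blocks, i.e.\ of order $\log^{1/2}(1+\alpha t)$. The remaining work is the uniform-in-$\lambda$ verification of \eqref{ieq_lo}, where the regimes $\lambda\to0$ (ratio $\approx\alpha\delta^2/3$, since $f(x)\approx x^2$ near $0$) and $\lambda\to\infty$ (ratio forced to $0$ by $\alpha\delta^2<4$) are handled separately as above; everything else is routine.
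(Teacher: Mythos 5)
Your proposal is correct and follows essentially the same route as the paper: controllability from the affine decomposition $X_t^x=xe^{-\alpha t}+X_t^0$, the cited Graversen--Peskir $L^1$ bound fed into Theorem \ref{upperbound}, and the same even solution $f$ of $\mathcal{L}f=1$ fed into Theorem \ref{lowerbound} together with the weakened condition of Remark \ref{rem2}. The only cosmetic difference is in verifying \eqref{ieq_lo}: the paper uses the cleaner bounds $f(x)\le\alpha^{-1}(e^{\alpha x^2}-1)=g^{-1}(\sqrt{\alpha}x)$ and $g^{-1}(ax)\ge a^2g^{-1}(x)$ to get the uniform estimate $\alpha\delta^2$ in one line, whereas you split into the regimes $\lambda\le1$ and $\lambda\ge1$; both are valid.
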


\begin{proof}
For any $x\in R$, let $X^x$ be the solution to the SDE
\begin{equation*}
dX_t^x = -\alpha X_t^xdt+dB_t,\;\;\;X_0^x = x.
\end{equation*}
Recall that the following $L^1$ maximal inequalities for $X$ have been established \cite[Theorem 2.5]{graversen2000maximal}:
\begin{equation*}
\Enum X^*_t \leq C\Enum \log^{1/2}(1+\alpha t),\;\;\;t\geq 0.
\end{equation*}
Then it easy to check that $X_t^x = xe^{-\alpha t}+X_t^0$ for any $t\geq 0$. Hence for any $t\geq 0$ and $\lambda>0$, we have
\begin{equation*}
\sup_{\abs{x}=\lambda}\Pro_x\bigl(X^*_t\geq 2\lambda\bigr)\leq \Pro_0\bigl(X^*_t\geq \lambda\bigr),
\end{equation*}
which means that $X$ is controllable. The upper bound of \eqref{eqou} then follows for Theorem \ref{upperbound}.

On the other hand, let $f:\Rnum\rightarrow\Rnum$ and $g:\Rnum_+\rightarrow\Rnum_+$ be the functions defined as
\begin{equation*}
f(x) = 2\int_0^x e^{\alpha u^2}du\int_0^u e^{-\alpha v^2}dv,\;\;\;
g(x) = \log^{1/2}(1+\alpha x).
\end{equation*}
Then $f$ is the unique solution to the initial value problem of the ordinary differential equation (ODE)
\begin{equation*}
\mathcal{L}f = 1,\;\;\;f(0) = f'(0) = 0,
\end{equation*}
where $\mathcal{L}$ is the generator of $X$. To proceed, note that
\begin{equation*}
f(x)\leq 2\int_0^x ue^{\alpha u^2}du
= \frac{1}{\alpha}(e^{\alpha x^2}-1) = g^{-1}(\sqrt{\alpha}x).
\end{equation*}
Moreover, it is not difficult to check that $g^{-1}(ax)\geq a^2g^{-1}(x)$ for any $x\geq 0$ and $a>1$. Since $f$ is an even function, we finally obtain
\begin{equation*}
\sup_{\substack{\lambda>0}}\frac{f(\delta\lambda)\vee f(-\delta\lambda)}
{g^{-1}(2\lambda)-g^{-1}(\lambda)}
\leq \sup_{\substack{\lambda>0}} \frac{f(\delta\lambda)}{g^{-1}(\lambda)}
\leq \sup_{\substack{\lambda>0}} \frac{g^{-1}(\sqrt{\alpha}\delta\lambda)}{g^{-1}(\lambda)} \leq \alpha\delta^2,
\end{equation*}
which tends to zero as $\delta\rightarrow 0$. Note that $f$ is increasing on $[0,\infty)$ and decreasing on $(-\infty, 0]$. The lower bound of \eqref{eqou} then follows for Theorem \ref{lowerbound} and Remark \ref{rem2}.
\end{proof}

\section{Brownian motions with drift and reflected Brownian motions with drift}\label{sec5}
Next we apply our main theorems to Brownian motions with drift and reflected Brownian motions with drift. Let $V_t = B_t-\mu t$ be a Brownian motion with drift $-\mu$ starting from zero, where $\mu>0$. The definition of a reflected Brownian motion with drift is given as follows \cite{graversen2000extension}.

\begin{definition}\label{defdrift}
For any $\mu>0$, let $\beta = (\beta_t)_{t\geq 0}$ be the unique solution to the SDE
\begin{equation*}
\dif \beta_t = -\mu\;\mathrm{sign}\bigl(\beta_t\bigr)\dif t+\dif B_t,\;\;\;\beta_0 = 0.
\end{equation*}
Then $X_t = \abs{\beta_t}$ is a realization of a reflected Brownian motion with drift $-\mu$ starting from zero.
\end{definition}

Note that in \cite{peskir2001bounding}, only the upper bound of the $L^1$ maximal inequalities for $V$ is obtained. In \cite{yan2005lp}, the lower bound of the $L^p$ maximal inequalities for $V$ is obtained, but the control function in the lower bound is different from that in the upper bound and thus is not sharp. Moreover, in \cite{peskir2001bounding, yan2005lp}, the $L^p$ maximal inequalities for $X$ are only obtained for $0<p<2$. The following theorem gives the sharp moderate maximal inequalities for (reflected) Brownian motions with drift, which imply that the $L^p$ maximal inequalities hold for all $p>0$.

\begin{theorem}\label{thm4.2}
Let $V = (V_t)_{t\geq 0}$ be a Brownian motion with drift $-\mu$ starting from zero and let $X = (X_t)_{t\geq 0}$ be a reflected Brownian motion with drift $-\mu$ starting from zero, where $\mu>0$. Then for any moderate function $F$, there exist two constants $c_{\mu,F},C_{\mu,F}>0$ such that for any $\tau$ of the filtration $\{\mathcal{F}_t\}$,
\begin{align}
\label{eqref1}
c_{\mu,F}\Enum F\bigl(\log(\mu\sqrt{\tau}+1)\bigr)\leq \Enum F(V^*_\tau) \leq C_{\mu,F}\Enum F\bigl(\log(\mu\sqrt{\tau}+1)
\bigr),\\
\label{eqref2}
c_{\mu,F}\Enum F\bigl(\log(\mu\sqrt{\tau}+1)\bigr)\leq \Enum F(X^*_\tau) \leq C_{\mu,F}\Enum F\bigl(\log(\mu\sqrt{\tau}+1)\bigr).
\end{align}
\end{theorem}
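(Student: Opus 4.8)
The plan is to apply Corollary~\ref{cor2} with $g(t)=\log(\mu\sqrt{t}+1)$ to each of $V$ and $X$: since $V$, and $X$ restricted to $(0,\infty)$, both have generator $\mathcal{L}=-\mu\,\frac{d}{dx}+\frac12\,\frac{d^2}{dx^2}$, the two cases differ only in the verification of controllability and in which weakened form of \eqref{ieq_lo} from Remark~\ref{rem2} is used; throughout I take $\beta=2$.

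For the upper bounds I first check controllability. For $V$ this is immediate: the copy started from $x$ is $V^x_t=x+V_t$, so $(V^x)^*_t\le|x|+V^*_t$ and hence $\sup_{|x|=\lambda}\Pro_x(V^*_t\ge2\lambda)\le\Pro_0(V^*_t\ge\lambda)$. For $X$ I would use the pathwise Skorokhod representation $X^x_t=x+\widetilde B_t-\mu t+L^x_t$, where $\widetilde B$ is a Brownian motion and $L^x$ is the local time of $X^x$ at $0$; since the Skorokhod reflection map is order preserving, comparing the starting points $0$ and $\lambda$ yields $X^\lambda_t-X^0_t=(\lambda-L^0_t)^+\in[0,\lambda]$, so $X^\lambda_t\le X^0_t+\lambda$ and $\Pro_\lambda(X^*_t\ge2\lambda)\le\Pro_0(X^*_t\ge\lambda)$. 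Thus both processes are controllable. Taking $p=1$ in \eqref{eqlp} and invoking the $L^1$ maximal inequalities $\Enum V^*_t\le C_\mu\,g(t)$ and $\Enum X^*_t\le C_\mu\,g(t)$ known from \cite{peskir2001bounding}, Theorem~\ref{upperbound} gives the upper bounds in \eqref{eqref1} and \eqref{eqref2}.

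For the lower bounds I solve $\mathcal{L}f=1$ with $f(0)=f'(0)=0$, obtaining
\[
f(x)=\frac{e^{2\mu x}-1}{2\mu^2}-\frac{x}{\mu},
\]
which is $C^2$, nonnegative, increasing on $[0,\infty)$ and decreasing on $(-\infty,0]$, with $f'(0)=0$ (so It\^{o}--Tanaka applies to $f(X_t)$ for the reflected process, the local-time term dropping out). Since $g^{-1}(y)=\mu^{-2}(e^y-1)^2$, the relevant variants of \eqref{ieq_lo} in Remark~\ref{rem2} (the one for functions increasing on $[0,\infty)$ and decreasing on $(-\infty,0]$ for $V$, and the one for nonnegative processes for $X$) reduce the problem to showing
\[
\lim_{\delta\downarrow0}\sup_{\lambda>0}\frac{f(\delta\lambda)\vee f(-\delta\lambda)}{g^{-1}(2\lambda)-g^{-1}(\lambda)}=0.
\]
Using $g^{-1}(2\lambda)-g^{-1}(\lambda)=\mu^{-2}e^{\lambda}(e^{\lambda}-1)^2(e^{\lambda}+2)\ge3\mu^{-2}(e^{\lambda}-1)^2$ and the elementary convexity estimate $f(\delta\lambda)\vee f(-\delta\lambda)\le\delta^2\lambda^2 e^{2\mu\delta\lambda}$, I would split at $\delta\lambda=1$: on $\{\delta\lambda\le1\}$ the denominator is $\ge3\mu^{-2}\lambda^2$, so the ratio is $\le\frac{\mu^2 e^{2\mu}}{3}\delta^2$; on $\{\delta\lambda>1\}$, which forces $\lambda>1/\delta$ and hence (for small $\delta$) $e^{\lambda}-1\ge e^{\lambda}/2$, the denominator is $\ge\frac34\mu^{-2}e^{2\lambda}$ and the ratio is $\le\frac43\mu^2\delta^2\lambda^2 e^{-2\lambda(1-\mu\delta)}$, which for small $\delta$ is decreasing in $\lambda$ on $(1/\delta,\infty)$ and therefore $\le\frac43\mu^2 e^{2\mu}e^{-2/\delta}$. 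Both bounds tend to $0$ as $\delta\downarrow0$, so Theorem~\ref{lowerbound} (together with Remark~\ref{rem2}) produces the lower bounds, completing the proof.

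The step I expect to be the main obstacle is controllability of the reflected process $X$: unlike $V$, it is not a translate of the copy started at $0$, so one must exploit the monotonicity and $1$-Lipschitz dependence of the Skorokhod map on the initial condition to extract the pathwise comparison $X^\lambda_t\le X^0_t+\lambda$. A secondary subtlety is that $f$ grows like $\delta^2\lambda^2$ near the origin but exponentially at infinity, exactly matching the corresponding change of regime in $g^{-1}$; this is why the verification of \eqref{ieq_lo} must be carried out separately in the two regimes above rather than with a single global estimate.
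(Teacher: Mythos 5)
Your proposal is correct and follows the paper's overall strategy (Theorems \ref{upperbound} and \ref{lowerbound} plus Remark \ref{rem2}, with the same explicit solution $f_\mu(x)=(e^{2\mu x}-2\mu x-1)/(2\mu^2)$ of $\mathcal{L}f=1$), but it diverges from the paper at precisely the step you flag as the main obstacle, and your route there is genuinely different and simpler. The paper never argues on the reflected process directly: it keeps the sign-drift diffusion $\beta$ of Definition \ref{defdrift}, and proves its controllability by splitting the exit event according to whether $\beta$ hits $0$ before leaving $[-2\lambda,2\lambda]$, handling one piece by the strong Markov property at the hitting time of $0$ and the other by a comparison-theorem argument that requires constructing auxiliary continuous drifts $b_1,b_2$ sandwiching $-\mu\,\mathrm{sign}$ and invoking the Nakao--Le Gall pathwise uniqueness theorem; this yields the constant $2$ in \eqref{temp3}. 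Your Skorokhod-map argument replaces all of this with the order-preserving, $1$-Lipschitz dependence of the reflection map on the initial condition, giving $X^\lambda_t\le X^0_t+\lambda$ pathwise under the natural coupling and hence controllability with constant $1$; this is legitimate because Tanaka's formula applied to $|\beta^x|$ (or the result of \cite{graversen2000extension}) identifies the reflected process started from $x\ge 0$ with the Skorokhod reflection of $x+\widetilde{B}_t-\mu t$ in law, and only laws enter \eqref{outereq}. Two presentational points: since Theorems \ref{upperbound} and \ref{lowerbound} are stated for diffusions of the form \eqref{model}, you should either transfer your bounds back to $\beta$ via $X^*=\beta^*$ (immediate) or remark that the proofs use only the strong Markov property; and your observation that $f_\mu'(0)=0$ kills the local-time term in It\^o--Tanaka is exactly what the paper encodes by using the even $C^2$ extension $\tilde f_\mu(x)=f_\mu(|x|)$ on $\beta$. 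Finally, your verification of \eqref{ieq_lo} with $g^{-1}(y)=\mu^{-2}(e^y-1)^2$ and a split at $\delta\lambda=1$ is correct but more laborious than the paper's, which takes $g=f_\mu^{-1}$ so that the ratio becomes $f_\mu(\delta\lambda)/(f_\mu(2\lambda)-f_\mu(\lambda))\le\delta$ by monotonicity of the integrand, and only afterwards converts $f_\mu^{-1}$ into $\log(\mu\sqrt{\cdot}+1)$ via the two-sided bound \eqref{eqgmu} and the moderateness of $F$.
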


\begin{proof}
We first focus on the moderate maximal inequalities for $V$. Recall that the following $L^1$ maximal inequality for $V$ has been established \cite[Equation (2.47)]{peskir2001bounding}:
\begin{equation*}
\Enum V^*_t \leq C_\mu g_\mu(t),\;\;\;t\geq 0.
\end{equation*}
To proceed, let $V_t^x = B_t+\mu t+x = V_t+x$ be a Brownian motion with drift starting from $x\in\Rnum$. For any $t\geq 0$ and $\lambda> 0$, we have
\begin{equation*}
\sup_{\abs{v}=\lambda}\Pro_v(V^*_t\geq 2\lambda)\leq \Pro_0\bigl(V^*_t\geq \lambda\bigr),
\end{equation*}
which shows that $V$ is controllable. Let $f_{\mu}:\Rnum\rightarrow\Rnum_+$ be the function defined as
\begin{equation}\label{eq4.1}
f_{\mu}(x) = \frac{e^{2\mu x}-2\mu x -1}{2\mu^2},
\end{equation}
and let $g_{\mu}:\Rnum_+\rightarrow\Rnum_+$ denote the inverse of $f_{\mu}$ for $x\geq 0$. It is easy to check that for any $x\geq 0$,
\begin{equation*}
\frac{1}{\mu^2}\left(e^{\mu x}-2e^{\mu x/2}+1\right)\leq f_\mu(x)\leq  \frac{1}{\mu^2}\left(e^{4\mu x}-2e^{\mu x}+1\right).
\end{equation*}
This indicates that
\begin{equation}\label{eqgmu}
\frac{1}{2\mu}\log(\mu\sqrt{x}+1)\leq g_\mu(x)\leq  \frac{2}{\mu}\log(\mu\sqrt{x}+1).
\end{equation}
The upper bound then follows from Theorem \ref{upperbound} and the inequality \eqref{eqgmu}. On the other hand, it is easy to check $f_\mu$ is the unique solution to the initial value problem of the ODE
\begin{equation*}
\mathcal{L}f = 1,\;\;\;f(0) = f'(0) = 0,
\end{equation*}
where $\mathcal{L}$ is the generator of $V$, and has the following integral representation:
\begin{equation*}
f_\mu(x) = \int_0^xh_{\mu}(u)du,
\end{equation*}
where $h_{\mu}(u) = 2e^{2\mu u}\int_{0}^{u}e^{-2\mu v}dv$ is an strictly increasing function for $u\geq 0$. Hence for any $\lambda>0$, we have
\begin{equation*}
f_{\mu}(2\lambda)-f_{\mu}(\lambda) = \int_\lambda^{2\lambda}h_{\mu}(u)du
> \int_0^\lambda h_{\mu}(u)du = f_{\mu}(\lambda)
\end{equation*}
and for any $0<\delta<1$,
\begin{equation*}
f_{\mu}(\delta\lambda) = \int_0^{\delta\lambda}h_{\mu}(u)du
= \delta\int_0^\lambda h_{\mu}(\delta u)du < \delta f_{\mu}(\lambda).
\end{equation*}
Moreover, it is easy to check that $f_{\mu}(x)>f_{\mu}(-x)$ for any $x>0$. Thus we finally obtain
\begin{equation}\label{drifted low}
\sup_{\lambda>0}\frac{f_\mu(\delta\lambda)\vee f_\mu(-\delta\lambda)}{f_\mu(2\lambda)-f_\mu(\lambda)}
\leq \sup_{\lambda> 0}\frac{f_\mu(\delta\lambda)}{f_\mu(\lambda)} \leq \delta,
\end{equation}
which tends to zero as $\delta\rightarrow 0$. Note that $f_\mu$ is increasing on $[0,\infty)$ and decreasing on $(-\infty, 0]$. The lower bound then follows from Theorem \ref{lowerbound}, Remark \ref{rem2}, and the inequality \eqref{eqgmu}.

We next focus on the moderate maximal inequalities for $X$. Recall that the following $L^1$ maximal inequality for $X$ has been established \cite[Theorem 2.1]{peskir1999maximal}:
\begin{equation*}
\Enum X^*_t \leq C_\mu g_\mu(t),\;\;\;t\geq 0.
\end{equation*}
Next we will prove that for any $t\geq 0$ and $\lambda>0$,
\begin{equation}\label{temp3}
\Pro_{\lambda}(\beta^*_t\geq 2\lambda)\leq 2\Pro_0\bigl(\beta^*_t\geq \lambda\bigr).
\end{equation}
For any $x>0$, let $\tau_x = \inf\{t\geq 0:|\beta_t|\geq x\}$ and $\eta = \inf\{t\geq 0:\beta_t=0\}$. It is easy to see that
\begin{align*}
\Pro_{\lambda}(\beta^*_t\geq 2\lambda)
= \Pro_{\lambda}(\tau_{2\lambda}\leq t)
= \Pro_{\lambda}(\tau_{2\lambda}\leq t,\tau_{2\lambda}\geq \eta)
+\Pro_{\lambda}(\tau_{2\lambda}\leq t,\tau_{2\lambda}<\eta) := \mathrm{I}+\mathrm{II}.
\end{align*}
By the strong Markov property of $\beta$, we have
\begin{equation}\label{temp1}
\begin{split}
\mathrm{I} =&\; \Pro_{\lambda}(\tau_{2\lambda}\leq t,\tau_{2\lambda}\geq\eta)\\
=&\; \Enum_{\lambda}\bigl[I_{\{\tau_{2\lambda}\geq\eta\}}
\Enum_{\lambda}(I_{\{\eta\leq\tau_{2\lambda}\leq t\}}|\mathcal{F}_{\eta})\bigl]\\
\leq&\; \Enum_{\lambda}\bigl[I_{\{\tau_{2\lambda}\geq \eta\}}
\Pro_{0}(\tau_{2\lambda}\leq t)\bigl]
\leq \Pro_0(\tau_{2\lambda}\leq t)
\leq \Pro_0\bigl(\beta^*_t\geq\lambda\bigr).
\end{split}
\end{equation}
To proceed, let $Y^\lambda_t = \beta_t+\lambda$ for any $t\geq 0$. Then $Y^\lambda = (Y^\lambda_t)_{t\geq 0}$ is the solution to the SDE
\begin{equation*}
dY^\lambda_t = -\mu\;\mathrm{sign}\bigl(Y^\lambda_t-\lambda)\dif t+\dif B_t,\;\;\;Y^\lambda_0 = \lambda.
\end{equation*}
Moreover, let $\beta^\lambda = (\beta^\lambda_t)_{t\geq 0}$ be the solution to the SDE
\begin{equation*}
d\beta^\lambda_t = -\mu\;\mathrm{sign}\bigl(\beta^\lambda_t)\dif t+\dif B_t,\;\;\;\beta^\lambda_0 = \lambda.
\end{equation*}
It then follows from the comparison theorem that $\beta^\lambda_t\leq Y^\lambda_t$ for any $t\geq 0$ (with probability one). Note that here we did not use the classical version of the comparison theorem, which requires that at least one of the drift terms of $\beta^\lambda$ and $Y^\lambda$ satisfies the Lipschitz condition \cite[Chapter IX, Theorem 3.7]{revuz1999continuous}. Rather here we use the version stated in \cite[Chapter VI, Theorem 1.1]{Ikeda}. Specifically, let $b_1,b_2:\Rnum\rightarrow\Rnum$ be two continuous functions defined as
\begin{equation*}
b_1(x) = \begin{cases}
\mu,& x\leq 0,\\
\mu(1-4x/\lambda), &0<x<\lambda/2,\\
-\mu, &x\geq\lambda/2,
\end{cases}
\end{equation*}
and $b_2(x)=b_1(x-\lambda/2)$. For any $x\in\Rnum$, it is easy to see that
\begin{equation*}
b_1(x)\leq b_2(x),\;\;\;
-\mu\;\mathrm{sign}(x)\leq b_1(x),\;\;\;
-\mu\;\mathrm{sign}(x)\geq b_2(x).
\end{equation*}
Moreover, since the drift and diffusion terms of both $\beta^\lambda$ and $Y^\lambda$ are bounded and the diffusion terms of both processes are constants, it follows from the Nakao-Le Gall uniqueness theorem \cite[Chapter V, Theorem 41.1]{rogers2000diffusions2} that both processes are pathwise unique. Then the comparison theorem \cite[Chapter VI, Theorem 1.1]{Ikeda} indicates that $\beta^\lambda_t\leq Y^\lambda_t$ for any $t\geq 0$. For any $x>0$, let $\gamma_x  = \inf\{t\geq 0:Y^\lambda_t\geq x\}$ and let $\sigma_x  = \inf\{t\geq 0:\beta^\lambda_t\geq x\}$. Then we have
\begin{equation}\label{temp2}
\mathrm{II} = \Pro_{\lambda}(\tau_{2\lambda}\leq t,\tau_{2\lambda}<\eta)
= \Pro(\sigma_{2\lambda}\leq t) \leq \Pro(\gamma_{2\lambda}\leq t)
\leq \Pro_0\bigl(\beta^*_t\geq \lambda\bigr).
\end{equation}
Combining \eqref{temp1} and \eqref{temp2}, we obtain \eqref{temp3}. Similarly, we can prove that
\begin{equation*}
\Pro_{-\lambda}(\beta^*_t\geq 2\lambda)\leq 2\Pro_0\bigl(\beta^*_t\geq \lambda\bigr).
\end{equation*}
This equation, together with \eqref{temp3}, shows that $\beta$ is controllable. The upper bound of \eqref{eqref2} then follows from Theorem \ref{upperbound} and \eqref{eqgmu}. On the other hand, let $\tilde{f}:\Rnum\rightarrow\Rnum$ be the function defined as
\begin{equation*}
\tilde{f}_{\mu}(x) = \frac{e^{2\mu \abs{x}}-2\mu \abs{x} -1}{2\mu^2}.
\end{equation*}
It is straightforward to check that $\tilde{f}_{\mu}\in C^2(\Rnum)$ and
\begin{equation*}
\mathcal{L_{\mu}}\tilde{f}_{\mu} = 1,\;\;\;\tilde{f}_{\mu}(0) = \tilde{f}_{\mu}'(0) = 0,
\end{equation*}
where $\mathcal{L_{\mu}}$ is the generator of $\beta$. Since $\tilde{f}_{\mu}(x) = f_{\mu}(|x|)$ for any $x\in\Rnum$, it follows from \eqref{drifted low} that
\begin{equation*}
\lim_{\delta\downarrow 0}\sup_{\lambda>0}\frac{\tilde{f}_\mu(\delta\lambda)\vee \tilde{f}_\mu(-\delta\lambda)}{f_\mu(2\lambda)-f_\mu(\lambda)} = 0.
\end{equation*}
Applying Theorem \ref{lowerbound} and Remark \ref{rem2} to $\beta$ and noting that the maximal processes of $\beta$ and $X$ are the same give the lower bound of \eqref{eqref2}.
\end{proof}

\section{Cox-Ingersoll-Ross processes, radial Ornstein-Uhlenbeck processes, and Bessel processes}
In mathematical finance, the Cox-Ingersoll-Ross (CIR) model is widely applied to describe the evolution of interest rates \cite{cox1985theory}. We first recall the following definition \cite{yan2004maximal, going2003survey}.

\begin{definition}\label{defCIR}
For any $a\geq 0$, $b\in\mathbb{R}$, and $c>0$, the unique solution $C = (C_t)_{t\geq 0}$ to the SDE
\begin{equation}\label{CIR}
\dif C_t = \bigl(a+bC_t\bigr)\dif t+c\sqrt{\abs{C_t}}\dif B_t,\;\;\;C_0 = x\geq 0,
\end{equation}
is called the CIR process starting from $x$ and is denoted by $\mathrm{CIR}(a,b,c,x)$.
\end{definition}

The comparison theorem ensures that $C_t\geq 0$ for any $t\geq 0$ \cite{going2003survey}. When $a = 0$ and $x = 0$, the solution to \eqref{CIR} is the constant process $C = 0$. In this case, the maximal inequalities for $C$ is trivial and thus we assume $a>0$ in the following. In the special case of $a = \alpha>0$, $b=0$, and $c=2$, the CIR process reduces to a squared Bessel process, which is defined below \cite[Chapter XI, Definition 1.1]{revuz1999continuous}.

\begin{definition}
For any $\alpha>0$, the unique solution to the SDE
\begin{equation*}
\dif Y^\alpha_t = \alpha\dif t+2\sqrt{Y^\alpha_t}\dif B_t,\;\;\;Y^\alpha_0 = x\geq 0,
\end{equation*}
is called a squared Bessel process starting from $x$ and is denoted by $\mathrm{BESQ}(\alpha,x)$.
\end{definition}

We first study the moderate maximal inequalities for squared Bessel processes.

\begin{theorem}\label{BESQ}
Let $Y^\alpha = (Y^\alpha_t)_{t\geq 0}$ be a $\mathrm{BESQ}(\alpha,0)$. Then for any moderate function $F$, there exist two constants $c_{\alpha,F},C_{\alpha,F}>0$ such that for any stopping time $\tau$ wth respect to the filtration $\{\mathcal{F}_t\}$,
\begin{equation}\label{eqBESQ}
c_{\alpha,F}\Enum F(\tau)\leq \Enum F(Y^{\alpha,*}_\tau)\leq C_{\alpha,F}\Enum F(\tau).
\end{equation}
\end{theorem}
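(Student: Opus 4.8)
The plan is to verify the hypotheses of Theorems~\ref{upperbound} and~\ref{lowerbound} (hence of Corollary~\ref{cor2}) with the identity control function $g(t)=t$; then \eqref{eqBESQ} follows at once. For the \emph{lower bound} the choice of auxiliary function is immediate: the generator of $Y^\alpha$ is $\mathcal{L}=\alpha\frac{d}{dx}+2x\frac{d^2}{dx^2}$, and $f(x)=x/\alpha$ is a $C^2$ function on $\Rnum$ with $\mathcal{L}f\equiv 1$ and $f(0)=0$. Since $g^{-1}(x)=x$, for any fixed $\beta>1$,
\[
\sup_{\substack{\lambda>0,\\ \abs{x}<\delta\lambda,\ \abs{y}<\delta\lambda}}\frac{f(y)-f(x)}{g^{-1}(\beta\lambda)-g^{-1}(\lambda)}\ \le\ \frac{2\delta\lambda/\alpha}{(\beta-1)\lambda}\ =\ \frac{2\delta}{\alpha(\beta-1)}\ \longrightarrow\ 0
\]
as $\delta\downarrow 0$, so \eqref{ieq_lo} holds and Theorem~\ref{lowerbound} yields $\Enum F(Y^{\alpha,*}_\tau)\ge c_{\alpha,F}\Enum F(\tau)$. (One may instead invoke the nonnegative-process form of the hypothesis noted in Remark~\ref{rem2}.)

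For the \emph{upper bound} I would first supply the $L^p$ estimate \eqref{eqlp} with $p=2$. From $Y^\alpha_t=\alpha t+2\int_0^t\sqrt{Y^\alpha_s}\,dB_s$ and $\Enum\int_0^t Y^\alpha_s\,ds=\alpha t^2/2<\infty$, the stochastic integral is a true martingale, so $Y^\alpha$ is a nonnegative submartingale with $\Enum(Y^\alpha_t)^2=(\alpha^2+2\alpha)t^2$; Doob's $L^2$ maximal inequality then gives $\Enum(Y^{\alpha,*}_t)^2\le 4(\alpha^2+2\alpha)t^2$, which is \eqref{eqlp} with $g(t)=t$.

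The remaining and genuinely harder point is to show that $Y^\alpha$ is \emph{controllable}. Here I would use the additivity of squared Bessel processes \cite[Chapter XI]{revuz1999continuous}: $\mathrm{BESQ}(\alpha,\lambda)\stackrel{d}{=}\mathrm{BESQ}(\alpha,0)+\mathrm{BESQ}(0,\lambda)$ with the two summands independent. Writing $Y\sim\mathrm{BESQ}(\alpha,0)$ and $W\sim\mathrm{BESQ}(0,\lambda)$ independent, this gives $(Y^{\alpha,\lambda})^*_t\le Y^*_t+W^*_t$, so for $t>0$ (the case $t=0$ is trivial) and $\lambda>0$,
\[
\Pro_\lambda\!\bigl(Y^*_t\ge 4\lambda\bigr)\ \le\ \Pro_0\!\bigl(Y^*_t\ge 2\lambda\bigr)+\Pro\!\bigl(W^*_t\ge 2\lambda\bigr).
\]
For the last term, $\sqrt{W}$ is a dimension-zero Bessel process started at $\sqrt\lambda$, so $\sqrt{W_t}=\sqrt\lambda+B_t-\int_0^t(2\sqrt{W_s})^{-1}\,ds\le\sqrt\lambda+\sup_{s\le t}B_s$ until $W$ reaches $0$ (after which $\sqrt{W}\equiv 0$); hence $W^*_t\le(\sqrt\lambda+\sup_{s\le t}B_s)^2$ and, by the reflection principle, $\Pro(W^*_t\ge 2\lambda)\le 2\,\Pro(\mathcal{N}(0,t)\ge(\sqrt2-1)\sqrt\lambda)=\Pro(\chi^2_1\ge(\sqrt2-1)^2\lambda/t)$. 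On the other hand $Y^*_t\ge Y_t$ and $Y_t\stackrel{d}{=}t\,\chi^2_\alpha$ (a classical fact), so $\Pro_0(Y^*_t\ge\gamma\lambda)\ge\Pro(\chi^2_\alpha\ge\gamma\lambda/t)$ for every $\gamma>0$. Fixing $\gamma=(\sqrt2-1)^2/2$, the elementary tail asymptotics $\Pro(\chi^2_k\ge x)\sim c_k x^{k/2-1}e^{-x/2}$ as $x\to\infty$ give $C:=\sup_{u>0}\Pro(\chi^2_1\ge(\sqrt2-1)^2u)\big/\Pro(\chi^2_\alpha\ge\gamma u)<\infty$, since the exponential rate on the denominator is $\gamma/2<(\sqrt2-1)^2/2$ while both tails tend to $1$ as $u\downarrow 0$. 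Because also $\gamma<2$, combining the displays yields $\Pro_\lambda(Y^*_t\ge 4\lambda)\le(1+C)\,\Pro_0(Y^*_t\ge\gamma\lambda)$ for all $t,\lambda>0$, i.e. $Y^\alpha$ is controllable with $\beta=4$. Theorem~\ref{upperbound} then gives $\Enum F(Y^{\alpha,*}_\tau)\le C_{\alpha,F}\Enum F(\tau)$, and Corollary~\ref{cor2} finishes the proof. The main obstacle is precisely the control of $\Pro(W^*_t\ge 2\lambda)$ for the $\mathrm{BESQ}(0,\lambda)$ component — a crude Doob bound on this nonnegative martingale only gives a constant, so the comparison $\sqrt{W}\le\sqrt\lambda+\sup_{s\le t}B_s$ and the ensuing chi-square tail comparison are what make the argument work.
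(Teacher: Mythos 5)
Your proof is correct, and the lower bound is essentially identical to the paper's (same $f_\alpha(x)=x/\alpha$ with $\mathcal{L}f_\alpha=1$, same trivial verification of \eqref{ieq_lo} with $g(t)=t$). Where you genuinely diverge is in the two ingredients of the upper bound. For the $L^p$ input \eqref{eqlp} the paper quotes the estimate $\Enum (Y^{\alpha,*}_t)^p\leq \alpha^p\tfrac{2-p}{1-p}t^p$ for $0<p<1$ from Yan--Li, whereas you derive an $L^2$ bound from scratch via $\Enum (Y^\alpha_t)^2=(\alpha^2+2\alpha)t^2$ and Doob's submartingale inequality; since Theorem \ref{upperbound} only needs \emph{some} $p>0$, your route is self-contained and avoids an external citation (one the authors themselves flag as partially questionable for $p\geq 1$, though the $p<1$ case they use is fine). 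For controllability the paper runs a three-case analysis: integer $\alpha$ via the $\norm{W_s}\geq 2\sqrt\lambda,\ \norm{W_0}=\sqrt\lambda\Rightarrow\norm{W_s-W_0}\geq\sqrt\lambda$ trick for multidimensional Brownian motion, then non-integer $\alpha>1$ by comparison with $\lceil\alpha\rceil$, then $0<\alpha<1$ by a dyadic induction using additivity, yielding the explicit constants $C=2^{\lceil 1/\alpha\rceil}$, $\gamma=2^{-\lceil 1/\alpha\rceil}$. You instead use additivity once, $\mathrm{BESQ}(\alpha,\lambda)\overset{d}{=}\mathrm{BESQ}(\alpha,0)+\mathrm{BESQ}(0,\lambda)$, and reduce everything to a tail bound on the $\mathrm{BESQ}(0,\lambda)$ component via $\sqrt{W_s}\leq\sqrt\lambda+B_s$ up to absorption at $0$, followed by a $\chi^2$-tail comparison; you correctly identify that the crude Doob bound $\Pro(W^*_t\geq 2\lambda)\leq 1/2$ is useless here because the right-hand side $\Pro_0(Y^{\alpha,*}_t\geq\gamma\lambda)$ can be arbitrarily small when $\lambda/t$ is large. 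Your argument is uniform in $\alpha$ and arguably cleaner, at the price of a non-explicit constant $C$ (a supremum of a ratio of tails) and a little more care with the exact marginal law $Y^\alpha_t\overset{d}{=}t\chi^2_\alpha$; the paper's argument is more elementary case by case and gives explicit constants. Both are valid verifications of Definition \ref{outercontrollable}, so Corollary \ref{cor2} closes the proof either way.
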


\begin{proof}
We first prove that there exist constants $C,\gamma>0$, such that for any $t\geq 0$ and $\lambda>0$,
\begin{equation}\label{equcon1}
\Pro_{\lambda}(Y^{\alpha,*}_t\geq 4\lambda)\leq C\Pro_0\bigl(Y^{\alpha,*}_t\geq \gamma\lambda\bigr).
\end{equation}
To the end, we consider the following three cases.

The first case occurs when $\alpha = N$ is a positive integer. Recall that a squared Bessel process of dimension $N$ is the same in law as the square of the Euclidean norm of an $N$-dimensional Brownian motion \cite[Page 439]{revuz1999continuous}. Specifically, let $W_t = (W_{1,t},W_{2,t},\ldots, W_{N,t})$ be a $N$-dimensional standard Brownian motion. Then the process $Y^N_t := \sum_{i=1}^{N}W_{i,t}^2$ with $\sum_{i=1}^{N}W_{i,0}^2 = \lambda$ (e.g., with $W_{1,0} = \sqrt{\lambda}$ and $W_{2,0} = \cdots = W_{N,0} = 0$) is a $\mathrm{BESQ}(N,\lambda)$. Hence
\begin{align*}
\Pro_{\lambda}\left(Y^{N,*}_t\geq 4\lambda\right)
=&\; \Pro_{(\sqrt{\lambda},0,\cdots,0)}\bigg(\sup_{0\leq s\leq t}\sum_{i=1}^{N}W_{i,s}^2\geq 4\lambda\bigg)\\
=&\; \Pro_{(\sqrt{\lambda},0,\cdots,0)}
\bigg(\sup_{0\leq s\leq t}\sum_{i=1}^{N}\left(W_{i,s}-W_{i,0}+W_{i,0}\right)^2\geq 4\lambda\bigg)\\
\leq&\; \Pro_{(\sqrt{\lambda},0,\cdots,0)}\bigg(\sup_{0\leq s\leq t}\sum_{i=1}^{N}\left(W_{i,s}-W_{i,0}\right)^2\geq \lambda\bigg)
=\Pro_{0}\left(Y^{N,*}_t\geq \lambda\right).
\end{align*}

The second case occurs when $\alpha>1$ and $\alpha$ is not an integer. Let $\lceil\alpha\rceil$ denote the smallest integer larger than $\alpha$. By the comparison theorem and case 1, we have
\begin{align*}
\Pro_{\lambda}\left(Y^{\alpha,*}_t\geq 4\lambda\right)
\leq&\; \Pro_{\lambda}\left(Y^{\lceil\alpha\rceil,*}_t\geq 4\lambda\right)
\leq\Pro_0\left(Y^{\lceil\alpha\rceil,*}_t\geq \lambda\right)\\
=&\; \Pro_{(0,\cdots,0)}\bigg(\sup_{0\leq s\leq t}\sum_{i=1}^{\lceil\alpha\rceil}W_{i,s}^2\geq \lambda\bigg)\\
\leq&\; \Pro_{(0,\cdots,0)}\bigg(\sup_{0\leq s\leq t}\sum_{i=1}^{\lceil\alpha\rceil-1}W_{i,s}^2\geq \frac{1}{2}\lambda\;\;\mathrm{or}\;\sup_{0\leq s\leq t}W_{\lceil\alpha\rceil,s}^2\geq \frac{1}{2}\lambda\bigg)\\
\leq&\; 2\Pro_{(0,\cdots,0)}\bigg(\sup_{0\leq s\leq t}\sum_{i=1}^{\lceil\alpha\rceil-1}W_{i,s}^2\geq \frac{1}{2}\lambda\bigg)\\
=&\; 2\Pro_0\left(Y^{\lceil\alpha\rceil-1,*}_t\geq \frac{1}{2}\lambda\right)\leq 2\Pro_0\left(Y^{\alpha,*}_t\geq \frac{1}{2}\lambda\right).
\end{align*}

The third case occurs when $0<\alpha<1$. We will next prove by induction that for any $k\geq 1$,
\begin{equation}\label{equcon2}
\Pro_{\lambda}\left(Y^{\frac{1}{2^k},*}_t\geq 4\lambda\right)
\leq 2^k\Pro_0\left(Y^{\frac{1}{2^k},*}_t\geq \frac{1}{2^k}\lambda\right).
\end{equation}
To this end, we recall the following additive property for squared Bessel Processes \cite[Chapter XI, Theorem 1.2]{revuz1999continuous}: for any $x, x'\geq 0$ and $\alpha,\alpha'>0$, if $Y^{\alpha}\sim\mathrm{BESQ}(\alpha,x)$ and $Y^{\alpha'}\sim\mathrm{BESQ}(\alpha',x')$ are independent, then we have $Y^{\alpha}+Y^{\alpha'}\sim\mathrm{BESQ}(\alpha+\alpha',x+x')$. Hence by the comparison theorem and case 1, we obtain
\begin{align*}
\Pro_{\lambda}\left(Y^{\frac{1}{2},*}_t\geq 4\lambda\right)
\leq&\; \Pro_{\lambda}\left(Y^{1,*}_t\geq 4\lambda\right)
\leq\Pro_0\left(Y^{1,*}_t\geq \lambda\right)\\
=&\; \Pro_0\left(\big(Y^{\frac{1}{2}}+\tilde{Y}^{\frac{1}{2}}\big)^*_t\geq \lambda\right)\\
\leq&\; 2\Pro_0\left(Y^{\frac{1}{2},*}_t\geq \frac{1}{2}\lambda\right),
\end{align*}
where $\tilde{Y}^{\frac{1}{2}}$ is an independent copy of $Y^{\frac{1}{2}}$.
Suppose that (\ref{equcon2}) holds for some $k\geq 1$. Then
\begin{align*}
\Pro_{\lambda}\left(Y^{\frac{1}{2^{k+1}},*}_t\geq 4\lambda\right)
\leq&\; \Pro_{\lambda}\left(Y^{\frac{1}{2^k},*}_t\geq 4\lambda\right)
\leq 2^k\Pro_0\left(Y^{\frac{1}{2^k},*}_t\geq \frac{1}{2^k}\lambda\right)\\
=&\; 2^k\Pro_0\left(\left(Y^{\frac{1}{2^{k+1}}}+\tilde{Y}^{\frac{1}{2^{k+1}}}\right)^*_t\geq \frac{1}{2^{k}}\lambda\right)\\
\leq&\; 2^{k+1}\Pro_0\left(Y^{\frac{1}{2^{k+1}},*}_t\geq \frac{1}{2^{k+1}}\lambda\right),
\end{align*}
where $\tilde{Y}^{\frac{1}{2^{k+1}}}$ is an independent copy of $Y^{\frac{1}{2^{k+1}}}$. Hence by induction, \eqref{equcon2} holds for any $k\geq 1$. Finally, for any $\alpha>0$, there exists $k\geq 1$ such that $\alpha\in(1/2^{k+1},1/2^k]$. It then follows from the comparison theorem and \eqref{equcon2} that
\begin{align*}
\Pro_{\lambda}\left(Y^{\alpha,*}_t\geq 4\lambda\right)
\leq&\; \Pro_{\lambda}\left(Y^{\frac{1}{2^k},*}_t\geq 4\lambda\right)
\leq 2^n\Pro_0\left(Y^{\frac{1}{2^k},*}_t\geq \frac{1}{2^k}\lambda\right)\\
\leq&\; \frac{1}{2^{k+1}}\Pro_0\left(Y^{\frac{1}{2^{k+1}},*}_t\geq \frac{1}{2^{k+1}}\lambda\right)
\leq \frac{1}{2^{k+1}}\Pro_0\left(Y^{\alpha,*}_t\geq\frac{1}{2^{k+1}}\lambda\right).
\end{align*}

To summarize, for any $\alpha>0$, we have proved that
\begin{equation}\label{equcon3}
\Pro_{\lambda}\left(Y^{\alpha,*}_t\geq 4\lambda\right)
\leq 2^{\lceil \frac{1}{\alpha}\rceil}\Pro_0\left(Y^{\alpha,*}_t\geq 2^{-\lceil\frac{1}{\alpha}\rceil}\lambda\right),
\end{equation}
which shows that $Y^\alpha$ is controllable. Moreover, recall that for any $0<p<1$, the following $L^p$ maximal inequality for $Y^\alpha$ has been established \cite[Equation (3.16)]{yan2004maximal}
\begin{equation} \label{CIRL1}
\Enum (Y^{\alpha,*}_t)^p \leq \alpha^p\frac{2-p}{1-p}t^p,\;\;\;t\geq 0.
\end{equation}
The upper bound of \eqref{eqBESQ} then follows from Theorem \ref{upperbound}.

We next prove the lower bound. Let $f_{\alpha}(x) = x/\alpha$ for any $x\in\Rnum$. It is easy to check that
\begin{equation*}
\mathcal{L}f_{\alpha} = 1,\;\;\;f_{\alpha}(0) = 0,\;\;\;f_{\alpha}'(0) = \frac{1}{\alpha}.
\end{equation*}
where $\mathcal{L}$ is the generator of $Y^\alpha$. Note that
\begin{equation*}
\lim_{\delta\downarrow 0}\sup_{\lambda>0}\frac{f_{\alpha}(\delta\lambda)}{2\lambda-\lambda} = 0.
\end{equation*}
The lower bound then follows from Theorem \ref{lowerbound} and Remark \ref{rem2}.
\end{proof}

\begin{remark}
In \cite[Theorem 3.1]{yan2004maximal}, the authors have established the $L^p$ maximal inequalities for $Y^\alpha$ for any $p>0$. However, their proof when $p\geq 1$ is questionable, because they mistakenly regarded the random time $TI_{\{T>S\}}$ as a stopping time, when $S$ and $T$ are two stopping times with $S\leq T$ (see the last paragraph in page 119 of \cite{yan2004maximal}).
\end{remark}

We then apply the above theorem to Bessel processes \cite[Chapter XI, Definition 1.9]{revuz1999continuous}.

\begin{definition}\label{defbessel}
For any $\alpha>0$ and $x\geq 0$, the square root of the process $\mathrm{BESQ}(\alpha,x^2)$ is called a Bessel process of dimension $\alpha$ starting from $x$ and is denoted by $\mathrm{BES}(\alpha,x)$.
\end{definition}

Bessel processes may or may not be diffusions \cite{going2003survey}. A Bessel process of dimension $\alpha>1$ starting from $x>0$ is a submartingale and the solution to the SDE
\begin{equation}\label{eq5.1}
\dif{U^\alpha_t} = \frac{\alpha -1}{2U^\alpha_t}\dif t+\dif B_t,\;\;\; U^\alpha_0 = x.
\end{equation}
A Bessel process of dimension $\alpha = 1$ can be realized by a reflected Brownian motion, which is a submartingale but is not a diffusion in the sense of \eqref{model}. A Bessel process of dimension $0<\alpha<1$ is not even a semimartingale and thus is not a diffusion. Please refer to \cite{graversen1998maximal, going2003survey} for details.

Note that in \cite{dubins1994optimal, graversen1998maximal}, the $L^p$ maximal inequalities for $U^\alpha$ are obtained for $\alpha\geq 1$ and $p>0$. Moreover, in \cite{yan2005lp}, the $L^p$ maximal inequalities for $U^\alpha$ are obtained for $\alpha>0$ and $0<p<2$. The following theorem gives the sharp moderate maximal inequalities for Bessel processes, which imply that the $L^p$ maximal inequalities hold for all $\alpha>0$ and $p>0$.

\begin{corollary}\label{bessel}
Let $U^\alpha = (U^\alpha_t)_{t\geq 0}$ be a $\mathrm{BES}(\alpha,0)$. Then for any moderate function $F$, there exist two constants $c_{\alpha,F},C_{\alpha,F}>0$ such that for stopping time $\tau$ of the filtration $\{\mathcal{F}_t\}$,
\begin{equation}\label{eqBES}
c_{\alpha,F}\Enum F(\sqrt{\tau})\leq \Enum F(U^{\alpha,*}_\tau)\leq C_{\alpha,F}\Enum F(\sqrt{\tau}).
\end{equation}
\end{corollary}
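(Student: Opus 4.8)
The plan is to reduce Corollary~\ref{bessel} to Theorem~\ref{BESQ} by a change of variables. By Definition~\ref{defbessel}, $\mathrm{BES}(\alpha,0)$ is the square root of $\mathrm{BESQ}(\alpha,0)$, so we may write $U^\alpha_t=\sqrt{Y^\alpha_t}$ where $Y^\alpha=(Y^\alpha_t)_{t\ge 0}$ is a $\mathrm{BESQ}(\alpha,0)$ defined on the same filtered space. Since $x\mapsto\sqrt{x}$ is continuous and strictly increasing on $\Rnum_+$, the running supremum commutes with it:
\begin{equation*}
U^{\alpha,*}_t=\sup_{0\le s\le t}\sqrt{Y^\alpha_s}=\sqrt{\,\sup_{0\le s\le t}Y^\alpha_s\,}=\sqrt{Y^{\alpha,*}_t}.
\end{equation*}

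Next, given a moderate function $F$, I would set $G(x):=F(\sqrt{x})$ for $x\in\Rnum_+$ and check that $G$ is again moderate. Part~(a) of Definition~\ref{moderate} is immediate: $G$ is a composition of two continuous increasing functions vanishing at zero, hence has the same property. For part~(b), recall that the growth condition \eqref{requirement} for $F$ holds with any $\beta\ge 1$; choosing some $\beta>1$ we have $\sup_{y>0}F(\sqrt{\beta}\,y)/F(y)<\infty$, and therefore
\begin{equation*}
\sup_{x>0}\frac{G(\beta x)}{G(x)}=\sup_{x>0}\frac{F(\sqrt{\beta}\,\sqrt{x})}{F(\sqrt{x})}=\sup_{y>0}\frac{F(\sqrt{\beta}\,y)}{F(y)}<\infty,
\end{equation*}
so $G$ is moderate.

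Finally, I would apply Theorem~\ref{BESQ} to the squared Bessel process $Y^\alpha$ with the moderate function $G$: there exist $c_{\alpha,G},C_{\alpha,G}>0$ such that for any stopping time $\tau$ of $\{\mathcal{F}_t\}$,
\begin{equation*}
c_{\alpha,G}\Enum G(\tau)\le \Enum G\bigl(Y^{\alpha,*}_\tau\bigr)\le C_{\alpha,G}\Enum G(\tau).
\end{equation*}
Using $G(Y^{\alpha,*}_\tau)=F(\sqrt{Y^{\alpha,*}_\tau})=F(U^{\alpha,*}_\tau)$ and $G(\tau)=F(\sqrt{\tau})$, this is exactly \eqref{eqBES} with $c_{\alpha,F}:=c_{\alpha,G}$ and $C_{\alpha,F}:=C_{\alpha,G}$. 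There is no real obstacle here; the only points requiring care are the elementary identity $U^{\alpha,*}_t=\sqrt{Y^{\alpha,*}_t}$ and the verification that precomposition with $\sqrt{\cdot}$ preserves moderateness, both of which are routine.
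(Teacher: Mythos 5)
Your proposal is correct and follows exactly the paper's own argument: the paper likewise sets $f(x):=F(\sqrt{x})$, notes it is moderate, and invokes Theorem \ref{BESQ}. You have merely spelled out the routine details (the commutation of the running supremum with $\sqrt{\cdot}$ and the verification of moderateness) that the paper leaves implicit.
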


\begin{proof}
Let $f(x) := F(\sqrt{x})$ for any $x\geq 0$. Since $F$ is a moderate function, it is easy to check that $f$ is also a moderate function. The desired result then follows from Theorem \ref{BESQ}.
\end{proof}

The following theorem gives the moderate maximal inequalities for CIR process with $b<0$.

\begin{theorem}\label{thmCIR}
Let $C = (C_t)_{t\geq 0}$ be a $\mathrm{CIR}(a,b,c,0)$ with $a,c>0$ and $b<0$. Then for any moderate function $F$, there exist two constants $c_{F},C_{F}>0$ depending on $a$, $b$, and $c$ such that for any stopping time $\tau$ wth respect to the filtration $\{\mathcal{F}_t\}$,
\begin{equation*}
c_{F}\Enum F\left(\log\left(1-\frac{2ab}{c^2}\tau\right)\right)
\leq \Enum F(C_\tau^*) \leq C_{F}\Enum F\left(\log\left(1-\frac{2ab}{c^2}\tau\right)\right).
\end{equation*}
\end{theorem}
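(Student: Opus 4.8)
The plan is to verify the hypotheses of Corollary \ref{cor2} with the control function $g(x) = \log\bigl(1 + \tfrac{2a\abs{b}}{c^2}x\bigr)$ (observe $-\tfrac{2ab}{c^2} = \tfrac{2a\abs{b}}{c^2}>0$ since $b<0$), a strictly increasing continuous function on $\Rnum_+$ vanishing at $0$, with inverse $g^{-1}(y) = \tfrac{c^2}{2a\abs{b}}(e^y-1)$. For the lower bound I would take $f$ to be the solution of $\mathcal{L}f = 1$ on $[0,\infty)$ with $f(0)=0$, where $\mathcal{L} = (a+bx)\tfrac{d}{dx} + \tfrac12 c^2 x\tfrac{d^2}{dx^2}$; an integrating factor gives
\[
f'(x) = \frac{2}{c^2}\,x^{-2a/c^2}\,e^{2\abs{b}x/c^2}\int_0^x u^{2a/c^2-1}e^{-2\abs{b}u/c^2}\dif u ,
\]
which is positive and continuous on $[0,\infty)$ with $f'(0)=1/a$, so $f\in C^2$ and is strictly increasing there (extend it to a $C^2$ function on $\Rnum$; the extension is immaterial since $C$ is nonnegative). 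Bounding the inner integral by $\int_0^x u^{2a/c^2-1}\dif u = \tfrac{c^2}{2a}x^{2a/c^2}$ gives $f'(x)\le\tfrac1a e^{2\abs{b}x/c^2}$, hence $f(x)\le\tfrac{c^2}{2a\abs{b}}\bigl(e^{2\abs{b}x/c^2}-1\bigr) = g^{-1}\!\bigl(\tfrac{2\abs{b}}{c^2}x\bigr)$ for all $x\ge0$. Since $e^{s\mu}-1\le s(e^\mu-1)$ for $\mu>0$, $0<s<1$, and $g^{-1}(2\lambda)-g^{-1}(\lambda)\ge g^{-1}(\lambda)$, one gets for $\delta\in\bigl(0,\tfrac{c^2}{2\abs{b}}\bigr)$
\[
\sup_{\lambda>0}\frac{f(\delta\lambda)}{g^{-1}(2\lambda)-g^{-1}(\lambda)} \le \sup_{\lambda>0}\frac{g^{-1}\!\bigl(\tfrac{2\abs{b}}{c^2}\delta\lambda\bigr)}{g^{-1}(\lambda)} = \sup_{\lambda>0}\frac{e^{2\abs{b}\delta\lambda/c^2}-1}{e^\lambda-1} \le \frac{2\abs{b}}{c^2}\,\delta ,
\]
which tends to $0$ as $\delta\downarrow0$. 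As $C$ is nonnegative and $f$ is increasing on $[0,\infty)$, the first part of Remark \ref{rem2} reduces condition \eqref{ieq_lo} with $\beta=2$ to exactly this display, and Theorem \ref{lowerbound} yields the lower bound.

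For the upper bound I would invoke Theorem \ref{upperbound}, which requires (i) controllability of $C$ and (ii) an $L^p$ maximal inequality $\Enum(C^*_t)^p\le C_p(g(t))^p$ for some $p>0$. Both follow from the realization of CIR processes as sums of squares of Ornstein--Uhlenbeck processes: if $V_1,\dots,V_N$ are i.i.d. solutions of $\dif V_i = -\tfrac{\abs{b}}{2}V_i\dif t + \tfrac c2\dif W_i$, then $\sum_{i=1}^N V_i^2\sim\mathrm{CIR}\bigl(\tfrac{Nc^2}{4},b,c,\sum_i V_{i,0}^2\bigr)$, so when $N:=4a/c^2$ is a positive integer $\mathrm{CIR}(a,b,c,\lambda)$ is realized as $\sum_{i=1}^N V_i^2$ with $\sum_i V_{i,0}^2 = \lambda$. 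Writing $V_{i,s} = V_{i,0}e^{-\abs{b}s/2}+U_{i,s}$ with $U_i$ the OU process started at $0$ and using $(x+y)^2\le2x^2+2y^2$ gives $C_s\le 2\lambda + 2\sum_i U_{i,s}^2$, where $\sum_i U_i^2\sim\mathrm{CIR}(a,b,c,0)$; hence $\Pro_\lambda(C^*_t\ge4\lambda)\le\Pro_0(C^*_t\ge\lambda)$, which is the controllability of $C$ in the integer case. Moreover $\sum_i U_{i,s}^2\le N\max_i(U^*_{i,t})^2$, so the moderate maximal inequality for OU processes (Theorem \ref{themou}) applied with $F(x)=x^{2p}$ gives $\Enum(C^*_t)^p\le N^{p+1}C_p\bigl(\log(1+\tfrac{\abs{b}}{2}t)\bigr)^p$ for the process started at $0$. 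For non-integer $4a/c^2$ the $L^p$ estimate follows by comparing with the next integer dimension, while controllability follows by combining the integer case with the comparison theorem (a CIR process is pathwise increasing in $a$) and the additivity that the sum of independent $\mathrm{CIR}(a_1,b,c,0)$ and $\mathrm{CIR}(a_2,b,c,0)$ has law $\mathrm{CIR}(a_1+a_2,b,c,0)$, exactly as in the proof of Theorem \ref{BESQ} (cases 2 and 3). Finally $\log(1+\tfrac{\abs{b}}{2}t)$ and $g(t)=\log(1+\tfrac{2a\abs{b}}{c^2}t)$ are comparable on $\Rnum_+$ up to a multiplicative constant, so Theorem \ref{upperbound} applies and gives the upper bound.

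The lower bound and the $L^p$ input are routine once $g$ is identified; the real obstacle is the controllability estimate for non-integer $4a/c^2$, where the comparison/additivity induction must be arranged so that the contributions of the auxiliary $\mathrm{CIR}(a'-a,b,c,0)$ components are absorbed, uniformly in $t$ and $\lambda$, into a maximal probability for the process started at $0$ at a proportionally smaller level --- the same mechanism as in the proof of Theorem \ref{BESQ}, transplanted from squared Bessel to CIR processes.
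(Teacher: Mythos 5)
Your proposal is correct and reaches the theorem by the same overall scaffolding as the paper (Corollary \ref{cor2} with $g(t)=\log(1+\tfrac{2a\abs{b}}{c^2}t)$, the same scale function $f$ with $\mathcal{L}f=1$ and the bound $f(x)\leq\tfrac{c^2}{2a\abs{b}}(e^{2\abs{b}x/c^2}-1)$ for the lower bound, and the same three-case induction over $\alpha=4a/c^2$ for controllability), but it diverges in how the $L^p$ input \eqref{eqlp} for Theorem \ref{upperbound} is obtained. The paper proves the $L^p$ bound ($0<p<1$) from scratch via the Lenglart domination principle: it sandwiches $f$ between explicit functions $f_1,f_2$, sets $H_p=(f_1^{-1})^p$, constructs $\tilde H_p$, and verifies $\sup_x \tilde H_p(x)/H_p(x)<\infty$ before invoking Peskir's lemma. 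You instead bootstrap from Theorem \ref{themou}: for integer $N=4a/c^2$ you realize $\mathrm{CIR}(a,b,c,\cdot)$ as $\sum_{i=1}^N V_i^2$ with $V_i$ i.i.d.\ OU processes of rate $\abs{b}/2$ and diffusion $c/2$, bound $(C_t^*)^p\leq N^p\sum_i (V_{i,t}^*)^{2p}$, and apply the OU moderate maximal inequality with $F(x)=x^{2p}$; non-integer dimensions follow by comparison with $\lceil 4a/c^2\rceil$, and the constant inside the logarithm is absorbed since $\log(1+k_1t)\asymp\log(1+k_2t)$ uniformly on $\Rnum_+$. This is a genuinely different (and arguably cleaner) derivation that avoids the $H_p$/$\tilde H_p$ machinery entirely, at the price of leaning on the already-proved OU result; it is not circular since Theorem \ref{themou} is independent of the CIR theorem. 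Your controllability argument in the integer case is the paper's argument in different clothing: the paper's representation $C_t=e^{bt}Y^\alpha_{\rho(t)}$ with $Y^\alpha$ a squared Bessel process, written out for integer $\alpha$, is exactly your sum of squared OU processes, and both proofs use the same $(x+y)^2\leq 2x^2+2y^2$ decoupling of the initial condition. For non-integer dimension you only sketch the comparison-plus-additivity induction by reference to Theorem \ref{BESQ}, but the paper does precisely the same, so nothing essential is missing.
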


\begin{proof}
We first consider the upper bound. From \cite[Equation (4)]{going2003survey}, any CIR process $C\sim\mathrm{CIR}(a,b,c,x)$ can be represented by
\begin{equation}\label{rep}
C_t = e^{bt}Y^{\alpha}_{\frac{c^2}{4b}(1-e^{-bt})},
\end{equation}
where $Y^\alpha$ is a $\mathrm{BESQ}(\alpha,x)$ with $\alpha = 4a/c^2$. We first prove that there exist constants $C,\gamma>0$, such that for any $t\geq 0$ and $\lambda>0$,
\begin{equation}\label{equCIR}
\Pro_{\lambda}(C^{*}_t\geq 4\lambda)\leq C\Pro_0\bigl(C^{*}_t\geq \gamma\lambda\bigr).
\end{equation}
The proof of \eqref{equCIR} is similar to that of \eqref{equcon1} with some modifications. To prove the above inequality, we consider the following three cases.

The first case occurs when $\alpha = N$ is a positive integer. Let $\rho(t) = \frac{c^2}{4b}(1-e^{-bt})$ for any $t\geq 0$ and let $W_t=(W_{1,t},W_{2,t},\ldots, W_{N,t})$ be a $N$-dimensional standard Brownian motion. Since $b<0$, we have
\begin{align*}
\Pro_{\lambda}(C^{*}_t\geq 4\lambda)
=&\; \Pro_\lambda\left(\sup_{0\leq s\leq t}\left(e^{bs}Y^{\alpha}_ {\rho(s)}\right)\geq 4\lambda\right)\\
=&\; \Pro_{(\sqrt{\lambda},0,\cdots,0)}\left(\sup_{0\leq s\leq t}\left(e^{bs}\sum_{i=1}^{N}W_{i,\rho(s)}^2\right)\geq 4\lambda\right)\\
=&\; \Pro_{(\sqrt{\lambda},0,\cdots,0)}\left(\sup_{0\leq s\leq t}\left(e^{bs}\sum_{i=1}^{N}\left(W_{i,\rho(s)}-W_{i,0}+W_{i,0}\right)^2\right)\geq 4\lambda\right)\\
\leq&\; \Pro_{(\sqrt{\lambda},0,\cdots,0)}\left(\sup_{0\leq s\leq t}\left(e^{bs}\sum_{i=1}^{N}\left(W_{i,\rho(s)}-W_{i,0}\right)^2\right)\geq \lambda\right)
=\Pro_{0}\left(C^{*}_t\geq \lambda\right).
\end{align*}

The second case occurs when $\alpha>1$ and $\alpha$ is not an integer. Let $\lceil\alpha\rceil$ denote the smallest integer larger than $\alpha$. By the comparison theorem and case 1, we have
\begin{align*}
\Pro_{\lambda}\left(C^{*}_t\geq 4\lambda\right)
\leq&\; \Pro_{\lambda}\left(\sup_{0\leq s\leq t}\left(e^{bs}Y^{\lceil\alpha\rceil}_{\rho(s)}\right)\geq 4\lambda\right)
\leq\Pro_0\left(\sup_{0\leq s\leq t}\left(e^{bs}Y^{\lceil\alpha\rceil}_{\rho(s)}\right)\geq \lambda\right)\\
=&\; \Pro_{(0,\cdots,0)}\left(\sup_{0\leq s\leq t}\left(e^{bs}\sum_{i=1}^{\lceil\alpha\rceil}W_{i,\rho(s)}^2\right)\geq \lambda\right)\\
\leq&\; \Pro_{(0,\cdots,0)}\left(\sup_{0\leq s\leq t}\left(e^{bs}\sum_{i=1}^{\lceil\alpha\rceil-1}W_{i,\rho(s)}^2\right)\geq \frac{1}{2}\lambda\;\;\mathrm{or}\;\sup_{0\leq s\leq t}\left(e^{bs}W_{\lceil\alpha\rceil,\rho(s)}^2\right)\geq \frac{1}{2}\lambda\right)\\
\leq&\; 2\Pro_{(0,\cdots,0)}\left(\sup_{0\leq s\leq t}\left(e^{bs}\sum_{i=1}^{\lceil\alpha\rceil-1}W_{i,\rho(s)}^2\right)\geq \frac{1}{2}\lambda\right)\\
=&\; 2\Pro_0\left(\sup_{0\leq s\leq t}\left(e^{bs}Y^{\lceil\alpha\rceil-1}_{\rho(s)}\right)\geq \frac{1}{2}\lambda\right)\\
\leq&\; 2\Pro_0\left(\sup_{0\leq s\leq t}\left(e^{bs}Y^{\alpha}_{\rho(s)}\right)\geq \frac{1}{2}\lambda\right)
= 2\Pro_0\left(C^{*}_t\geq \frac{1}{2}\lambda\right).
\end{align*}

The third case occurs when $0<\alpha<1$. From the additive property for squared Bessel Processes and the representation \eqref{rep}, it is easy to obtain the following additive property for CIR processes: for any $a,a'>0$ and $x,x'\geq 0$, if $C\sim\mathrm{CIR}(a,b,c,x)$ and $C'\sim\mathrm{CIR}(a',b,c,x')$ are independent, then we have $C+C^{'}\sim\mathrm{CIR}(a+a',b,c,x+x')$. Then the proof of \eqref{equCIR} in the third case is the same as the proof of \eqref{equcon1} in the third case.

To summarize, we have proved \eqref{equCIR}, which shows that $C$ is controllable. We shall next establish the $L^p$ maximal inequality of $C$ for $0<p<1$ following the classical method of the Lenglart domination principle \cite[Lemma 2.1]{peskir2001bounding}. Let $f:\Rnum_+\rightarrow\Rnum_+$ be the function defined as
\begin{equation*}
f(x) = \frac{2}{c^2}\int_0^x t^{-\frac{2a}{c^2}}e^{-\frac{2b}{c^2}t}\dif t\int_0^t s^{\frac{2a}{c^2}-1}e^{\frac{2b}{c^2}s}\dif s.
\end{equation*}
Complex but straightforward calculations show that $f\in C^2(\Rnum_+)$ and
\begin{equation*}
\mathcal{L}f = 1,\;\;\;f(0)=0,\;\;\;f'(0)=\frac{1}{a},
\end{equation*}
where $\mathcal{L}$ is the generator of $C$. It then follows from It\^{o}'s formula that
\begin{equation}\label{eqcirtau}
\Enum f(X_{\tau})=\Enum\tau
\end{equation}
for any bounding stopping time $\tau$ of the filtration $\{\mathcal{F}_t\}$. For any $x\geq 0$, we have
\begin{equation}\label{eqf1ff2}
\begin{split}
f_1(x):=\frac{c^2}{-ab2^{2a/c^2}}\left(e^{-\frac{b}{c^2}x}-1\right)
=&\; \frac{2}{c^2}\int_0^x t^{-\frac{2a}{c^2}}e^{-\frac{2b}{c^2}t}
\left(e^{\frac{b}{c^2}t}\int_0^{t/2} s^{\frac{2a}{c^2}-1}\dif s\right)\dif t\\
\leq&\; \frac{2}{c^2}\int_0^x t^{-\frac{2a}{c^2}}e^{-\frac{2b}{c^2}t}\dif t
\int_0^{t/2} s^{\frac{2a}{c^2}-1}e^{\frac{2b}{c^2}s}\dif s\\
\leq&\; f(x)\leq
\frac{2}{c^2}\int_0^x t^{-\frac{2a}{c^2}}e^{-\frac{2b}{c^2}t}\dif t\int_0^t s^{\frac{2a}{c^2}-1}\dif s\\
=&\; -\frac{c^2}{2ab}\left(e^{-\frac{2b}{c^2}x}-1\right):=f_2(x).
\end{split}
\end{equation}
Let $g:\Rnum_+\rightarrow\Rnum_+$ be the function defined as
\begin{equation*}
g(x)= -\frac{c^2}{2b}\log\left(1-\frac{2ab}{c^2}x\right).
\end{equation*}
Since $f$, $f_1$, and $f_2$ are both strictly increasing and vanishes at zero, we obtain
\begin{equation}\label{eqf1ff22}
g(x) = f_2^{-1}(x) \leq f^{-1}(x) \leq f_1^{-1}(x) = -\frac{c^2}{b}\log\left(1-\frac{ab2^{2a/c^2}}{c^2}x\right).
\end{equation}
For any $0<p<1$, let $H_p(x)=(f^{-1}_1(x))^p$ and
\begin{equation*}
\tilde{H}_p(x) = x\int_x^{\infty}\frac{1}{s}dH_p(x)+2H_p(x)
\end{equation*}
for any $x\geq 0$. It is easy to check that
\begin{equation*}
\lim_{x\rightarrow 0}\frac{x}{H_p(x)}\int_x^{\infty}\frac{1}{s}dH_p(x) = \frac{p}{1-p},\;\;\; \lim_{x\rightarrow\infty}\frac{x}{H_p(x)}\int_x^{\infty}\frac{1}{s}dH_p(x) = 0.
\end{equation*}
Hence we obtain
\begin{equation}\label{eqHp}
\sup_{x\geq 0}\frac{\tilde{H}_p(x)}{H_p(x)} < \infty.
\end{equation}
By \eqref{eqcirtau}, \eqref{eqHp}, and the Lenglart domination principle \cite[Lemma 2.1]{peskir2001bounding}, we obtain the following $L^p$ maximal inequality for any $0<p<1$:
\begin{equation*}
\Enum\sup_{0\leq t\leq \tau}X_t^p = \Enum\sup_{0\leq t\leq \tau}(f^{-1}(f(X_t)))^p
\leq \Enum\sup_{0\leq t\leq \tau}H_p(f(X_t)
\leq \Enum\tilde{H}_p(\tau)\lesssim\Enum H_p(\tau)\lesssim\Enum g^p(\tau),
\end{equation*}
where $x\lesssim y$ means that there exists a constant $C>0$ depending only on $p$ such that $x\leq Cy$. The upper bound of \eqref{eqBES} then follows from Theorem \ref{upperbound}.

We next consider the lower bound. By \eqref{eqf1ff2} and \eqref{eqf1ff22}, we have
\begin{align*}\label{ieq_lo1}
\lim_{\delta\downarrow 0}
\sup_{\lambda>0}\frac{f(\delta\lambda)}{g^{-1}(2\lambda)-g^{-1}(\lambda)}
&\leq \lim_{\delta\downarrow 0}
\sup_{\lambda>0}\frac{ -\frac{c^2}{2ab}(e^{-\frac{2b}{c^2}\delta\lambda}-1) }
{-\frac{c^2}{2ab}(e^{-\frac{4b}{c^2}\lambda}-1)+\frac{c^2}{2ab}(e^{-\frac{2b}{c^2}\lambda}-1)}\\
&= \lim_{\delta\downarrow 0}
\sup_{\lambda>0}\frac{ \int_0^{\delta \lambda}e^{-\frac{2b}{c^2}t}dt }
{\int_{\lambda}^{2\lambda}e^{-\frac{2b}{c^2}t}dt}
 \leq\lim_{\delta\downarrow 0} \delta=0.
\end{align*}
Note that $f$ is increasing on $[0,\infty)$. The lower bound of \eqref{eqBES} then follows from Theorem \ref{lowerbound} and Remark \ref{rem2}.
\end{proof}

\begin{remark}
In \cite[Theorem 2.1]{yan2004maximal}, the authors have established the $L^p$ maximal inequalities for $C$ for any $p>0$. However, their proof when $p\geq 1$ is questionable, because they mistakenly regarded the random time $TI_{\{T>S\}}$ as a stopping time, when $S$ and $T$ are two stopping times with $S\leq T$ (see the last paragraph in page 117 in \cite{yan2004maximal}).
\end{remark}

We then apply the above theorem to radial OU processes \cite{going2003survey}.

\begin{definition}\label{defradial}
For any $\alpha\geq 0$, $\beta\in\mathbb{R}$, and $x\geq 0$, the square root of the process $\mathrm{CIR}(\alpha,2\beta,2,x^2)$ is called a radial Ornstein-Uhlenbeck process of dimension $\alpha$ and parameter $\beta$ starting from $x$, and is denoted by $\mathrm{ROU}(\alpha,\beta,x)$.
\end{definition}

It is known that for any $\alpha>1$, $\beta>0$, and $x>0$, an $\mathrm{ROU}(\alpha,\beta,x)$ is the solution to the SDE \cite{botnikov2006davis}
\begin{equation*}
dR_t = \left(\frac{\alpha-1}{2R_t}-\beta R_t\right)\dif t+\dif B_t,\;\;\;R_0 = x.
\end{equation*}

\begin{corollary}\label{cororadial}
Let $R = (R_t)_{t\geq 0}$ be an $\mathrm{ROU}(\alpha,\beta,0)$ with $\alpha,\beta>0$. Then for any moderate function $F$, there exist two constants $c_{F},C_{F}>0$ depending on $\alpha$ and $\beta$ such that for any stopping time $\tau$ with respect to the filtration $\{\mathcal{F}_t\}$,
\begin{equation*}
c_{F}\Enum F\left(\log\left(1+\alpha\beta\tau\right)\right)
\leq \Enum F(R_\tau^*) \leq C_{F}\Enum F\left(\log\left(1+\alpha\beta\tau\right)\right).
\end{equation*}

\end{corollary}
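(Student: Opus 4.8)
The plan is to derive the corollary from Theorem~\ref{thmCIR} in the same way Corollary~\ref{bessel} is derived from Theorem~\ref{BESQ}: by identifying the radial OU process as the square root of a CIR process and transferring the maximal inequality across that relation. First I would pin down the underlying CIR process. Applying It\^o's formula to $C_t := R_t^2$ and using the stated SDE $\dif R_t = \bigl(\tfrac{\alpha-1}{2R_t}-\beta R_t\bigr)\dif t+\dif B_t$, one obtains $\dif C_t = (\alpha-2\beta C_t)\dif t + 2\sqrt{C_t}\,\dif B_t$, so that $C=(C_t)_{t\ge 0}\sim\mathrm{CIR}(\alpha,-2\beta,2,0)$. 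Since $\alpha>0$, $c=2>0$, and $b=-2\beta<0$ (using $\beta>0$), Theorem~\ref{thmCIR} applies to $C$, and with $a=\alpha$, $b=-2\beta$, $c=2$ its control function is $\log\bigl(1-\tfrac{2ab}{c^2}\tau\bigr)=\log(1+\alpha\beta\tau)$. Before this I would reconcile the sign in Definition~\ref{defradial} (which writes $\mathrm{CIR}(\alpha,2\beta,2,x^2)$) with the mean-reverting drift $-\beta R_t$ of the SDE, adopting the SDE convention so that $b<0$ and Theorem~\ref{thmCIR} is indeed available.

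Next I would transfer the running maximum. Because $x\mapsto\sqrt{x}$ is increasing and both processes are nonnegative, $R^*_\tau=\sup_{0\le s\le\tau}\sqrt{C_s}=\sqrt{C^*_\tau}$, so any maximal inequality for $C$ is simultaneously one for $R$. This is precisely the $\mathrm{BESQ}\to\mathrm{BES}$ passage used in Corollary~\ref{bessel}.

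Finally I would combine Theorem~\ref{thmCIR} for $C$ with the moderate-function device of Corollary~\ref{bessel}. Feeding the moderate function $F$ through the relation $R^*_\tau=\sqrt{C^*_\tau}$ (as in the proof of Corollary~\ref{bessel}, where moderateness is preserved under the relevant increasing reparametrization) and invoking the two-sided CIR bound with control function $\log(1+\alpha\beta\tau)$, I would obtain constants $c_F,C_F>0$ such that
\begin{equation*}
c_F\Enum F\bigl(\log(1+\alpha\beta\tau)\bigr)\le \Enum F(R^*_\tau)\le C_F\Enum F\bigl(\log(1+\alpha\beta\tau)\bigr),
\end{equation*}
which is the assertion.

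The hard part will be controlling exactly how the CIR control function $\log(1+\alpha\beta\tau)$ is delivered to $R$ through $R^*_\tau=\sqrt{C^*_\tau}$: one must check that the moderate-function manipulation interacts with this square-root relation so as to reproduce the control function in the form stated for the radial OU process, mirroring the $\mathrm{BESQ}\to\mathrm{BES}$ step of Corollary~\ref{bessel}. A secondary, preliminary point is the sign reconciliation between Definition~\ref{defradial} and the stated SDE, which is what makes the CIR parameter $b=-2\beta$ negative and thereby brings Theorem~\ref{thmCIR}, with control function $\log(1+\alpha\beta\tau)$, to bear.
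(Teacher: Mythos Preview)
Your approach is exactly the paper's: set $f(x)=F(\sqrt{x})$, note that $f$ is moderate, and apply Theorem~\ref{thmCIR} to $C=R^2\sim\mathrm{CIR}(\alpha,-2\beta,2,0)$; the relation $R^*_\tau=\sqrt{C^*_\tau}$ then transfers the bound. Your sign observation is also correct: Definition~\ref{defradial} writes $2\beta$, but the SDE for $R$ forces $b=-2\beta<0$, which is what Theorem~\ref{thmCIR} requires.

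However, the ``hard part'' you flag is not a difficulty to be overcome but a genuine discrepancy. Carrying out the transfer literally gives
\[
\Enum F(R^*_\tau)=\Enum f(C^*_\tau)\;\sim\;\Enum f\bigl(\log(1+\alpha\beta\tau)\bigr)=\Enum F\bigl(\log^{1/2}(1+\alpha\beta\tau)\bigr),
\]
exactly as the $\mathrm{BESQ}\to\mathrm{BES}$ step of Corollary~\ref{bessel} turns $\tau$ into $\sqrt{\tau}$. So the method delivers the control function $\log^{1/2}(1+\alpha\beta\tau)$, not $\log(1+\alpha\beta\tau)$; this is consistent with the OU result of Theorem~\ref{themou} and the complex OU result of Theorem~\ref{Thm2.2}. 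The corollary as printed appears to be missing a square root, and the paper's own two-line proof (which is identical to yours) actually produces $\log^{1/2}$ as well. Do not expect the moderate-function reparametrization to absorb that square root: $\log^{1/2}(1+x)$ and $\log(1+x)$ are not equivalent for arbitrary moderate $F$ (already $F(x)=x$ distinguishes them).
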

\begin{proof}
Let $f(x) := F(\sqrt{x})$ for any $x\geq 0$. Since $F$ is a moderate function, it is easy to check that $f$ is also a moderate function. The desired result then follows from Theorem \ref{thmCIR}.
\end{proof}

\section{Complex Ornstein-Uhlenbeck processes}\label{thmou}
Thus far, the moderate maximal inequalities have been established for various one-dimensional diffusions. Interestingly, the results in this paper can also be used to establish the maximal inequalities for some high-dimensional processes. Here we consider the moderate maximal inequalities for the complex OU process, which is also an important kinetic model in statistical mechanics \cite{chen2016large}. We recall the following definition \cite{arato1962evaluation, chen2014eigenfunctions}.

\begin{definition}\label{defcomou}
Let $W = W^{(1)}+\mi{W^{(2)}}$ be a complex standard Brownian motion, where $W^{(1)}$ and $W^{(2)}$ are real standard Brownian motions with respect to the filtration $\{\mathcal{F}_t\}$. Then for any $\alpha = a+\mi b\in \mathbb{C}$ with $a>0$, the solution $Z = (Z_t)_{t\geq 0}$ of the following SDE
\begin{equation}\label{eq2.1}
\dif Z_t = -\alpha Z_t\dif t+ dW_t, \quad Z_0 = 0,
\end{equation}
is called a complex OU process starting from zero.
\end{definition}

The following theorem gives the moderate maximal inequalities for complex OU processes.

\begin{theorem}\label{Thm2.2}
Let $Z = (Z_t)_{t\geq 0}$ be the complex OU process solving \eqref{eq2.1}. Then for any moderate function $F$, there exist two constants $c_{\alpha,F},C_{\alpha,F}>0$ such that for any stopping time $\tau$ wth respect to the filtration $\{\mathcal{F}_t\}$,
\begin{equation}\label{eqcomplexou}
c_{\alpha,F}\Enum F\bigl(\log^{1/2}(1+2a \tau)\bigl)\leq \Enum F(Z^*_\tau)
\leq C_{\alpha,F}\Enum F\bigl(\log^{1/2}(1+2a \tau)\bigl).
\end{equation}
\end{theorem}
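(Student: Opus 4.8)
The plan is to reduce this two-dimensional statement to the one-dimensional CIR result, Theorem~\ref{thmCIR}, by showing that the squared modulus $R_t := |Z_t|^2$ is itself a CIR process started at zero. \emph{Step 1: identify $R$.} Applying It\^o's formula to $|Z_t|^2 = Z_t\overline{Z_t}$ (with $Z = Z^{(1)} + \mi Z^{(2)}$), using the independence of $W^{(1)}$ and $W^{(2)}$ and $\alpha + \bar\alpha = 2a$, one obtains
\begin{equation*}
\dif|Z_t|^2 = \bigl(2 - 2a|Z_t|^2\bigr)\,\dif t + 2\bigl(Z^{(1)}_t\,\dif W^{(1)}_t + Z^{(2)}_t\,\dif W^{(2)}_t\bigr).
\end{equation*}
Defining $\beta_t := \int_0^t |Z_s|^{-1}\bigl(Z^{(1)}_s\,\dif W^{(1)}_s + Z^{(2)}_s\,\dif W^{(2)}_s\bigr)$, L\'evy's characterization shows that $\beta$ is a standard one-dimensional Brownian motion with respect to $\{\mathcal F_t\}$, whence $R = (R_t)_{t\ge 0}$ solves $\dif R_t = (2 - 2aR_t)\,\dif t + 2\sqrt{R_t}\,\dif\beta_t$ with $R_0 = 0$. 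In the notation of \eqref{CIR} this is a $\mathrm{CIR}(2,-2a,2,0)$, and since $a>0$ its drift coefficient $-2a$ is strictly negative, so Theorem~\ref{thmCIR} is applicable to $R$.

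\emph{Step 2: transfer to $Z$.} Because $x\mapsto\sqrt x$ is increasing, $Z^*_\tau = \sup_{0\le s\le\tau}\sqrt{R_s} = \sqrt{R^*_\tau}$ for every stopping time $\tau$ of $\{\mathcal F_t\}$. Given a moderate function $F$, set $G(x) := F(\sqrt x)$; exactly as in the proofs of Corollaries~\ref{bessel} and~\ref{cororadial}, $G$ is again moderate, since $\sup_{x>0}G(\beta x)/G(x) = \sup_{y>0}F(\sqrt\beta\,y)/F(y) < \infty$. Then $\Enum F(Z^*_\tau) = \Enum G(R^*_\tau)$, and Theorem~\ref{thmCIR} applied to $R$ with the moderate function $G$ gives constants $c_G, C_G > 0$ (depending on $a$, hence on $\alpha$) with
\begin{equation*}
c_G\,\Enum G\!\Bigl(\log\bigl(1 - \tfrac{2\cdot2\cdot(-2a)}{2^2}\tau\bigr)\Bigr) \le \Enum G(R^*_\tau) \le C_G\,\Enum G\!\Bigl(\log\bigl(1 - \tfrac{2\cdot2\cdot(-2a)}{2^2}\tau\bigr)\Bigr).
\end{equation*}
Since $1 - \tfrac{2\cdot2\cdot(-2a)}{4}\tau = 1 + 2a\tau$ and $G\bigl(\log(1+2a\tau)\bigr) = F\bigl(\log^{1/2}(1+2a\tau)\bigr)$, this is precisely \eqref{eqcomplexou} with $c_{\alpha,F} = c_G$ and $C_{\alpha,F} = C_G$.

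The only delicate point is the rigorous identification in Step~1: the integrand $|Z_s|^{-1}$ in the definition of $\beta$ is singular on the zero set of $Z$, which is non-empty since $Z_0 = 0$. However, $Z$ is a time change of planar Brownian motion, for which the singleton $\{0\}$ is polar, so $\{t > 0 : Z_t = 0\}$ is $\Pro$-null and $\int_0^t 1_{\{Z_s = 0\}}\,\dif s = 0$ almost surely; this suffices to make the L\'evy-characterization argument valid and to invoke the pathwise uniqueness used in the proof of Theorem~\ref{thmCIR}. I expect this measure-zero bookkeeping — not any of the algebra — to be the main obstacle, and it is routine. An alternative that sidesteps the SDE manipulation altogether is to use the conformal-martingale representation $Z_t = e^{-\alpha t}\widetilde W_{\rho(t)}$ with $\rho(t) = (e^{2at}-1)/(2a)$ for a planar Brownian motion $\widetilde W$: then $|\widetilde W|^2$ is a $\mathrm{BESQ}(2,0)$ and $e^{bt}Y^{\alpha}_{\rho(t)}$ is exactly the CIR representation \eqref{rep}, but the direct It\^o route makes Theorem~\ref{thmCIR} immediately available.
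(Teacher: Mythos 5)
Your proposal is correct and follows essentially the same route as the paper: apply It\^o's formula to $|Z_t|^2$, identify it via L\'evy's characterization as a $\mathrm{CIR}(2,-2a,2,0)$ process, and transfer Theorem~\ref{thmCIR} through the moderate function $G(x)=F(\sqrt{x})$. Your extra remark on the polarity of $\{0\}$ for the time-changed planar Brownian motion, justifying the singular integrand in the definition of the driving Brownian motion, is a detail the paper's proof leaves implicit.
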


\begin{proof}
Let $Z_t = X_t+iY_t$ for any $t\geq 0$. It is easy to check that the pair $(X,Y)$ is the solution to the following two-dimensional diffusion:
$$\begin{cases}
dX_t=(-aX_t+bY_t)dt+W_{t}^{(1)},\\
dY_t=(-bX_t-aY_t)dt+W_{t}^{(2)}.
\end{cases}$$
By It\^{o}'s formula,  we have
\begin{align*}
|Z_t|^2=X_t^2+Y_t^2=&\int_0^t 2X_sdX_s+\int_0^t 2Y_sdY_s+\int_0^t d[X,X]_s+\int_0^t d[Y,Y]_s\\
=&\int_0^t(2-2aX_s^2-2aY_s^2)ds+2\int_0^t X_sdW_{s}^{(1)}+2\int_0^t Y_sdW_{s}^{(2)}\\
=&\int_0^t(2-2a|Z_s|^2)ds+2\int_0^t \sqrt{|Z_s|^2}dB_s,
\end{align*}
where the process $B = (B_t)_{t\geq 0}$ is defined as
\begin{equation*}
B_t = \int_0^t \frac{X_u}{\sqrt{|Z_u|^2}}dW_{u}^{(1)}+\frac{Y_u}{\sqrt{|Z_u|^2}}dW_{u}^{(2)}.
\end{equation*}
Note that $B$ is a continuous local martingale starting from zero and $\langle B, B\rangle_s=s$. It follows from L$\acute{e}$vy's characterization theorem \cite[P150 Theorem 3.6]{revuz1999continuous} that $B$ is a standard Brownian motion with respect to the filtration $\{\mathcal{F}_t\}$. Hence $|Z|^2$ is a CIR process. Let $f(x) = F(\sqrt{x})$ for any $x\geq 0$. Since $F$ is a moderate function, it is easy to check that $f$ is also a moderate function. The desired results then follows from Theorem \ref{thmCIR}.
\end{proof}

\section{Conformal local martingales}
In fact, the moderate maximal inequalities studied above can be used to established two types of moderate maximal inequalities for conformal local martingales, which can be viewed as an extension of the classical BDG inequality. We first recall the following definition \cite[Chapter V, Definition 2.2]{revuz1999continuous}.

\begin{definition}\label{defcomou}
Let $M=X+\mi Y$ be a continuous complex local martingale, i.e. $X$ and $Y$ are two real continuous local martingales. Then $M$ is called a conformal local martingale if
\begin{equation*}
[M,M] =[X,X]-[Y,Y]+2\mi [X,Y] = 0,
\end{equation*}
where $[X,Y]$ denotes the quadratic variation process between $X$ and $Y$.
\end{definition}

Before we focus on conformal local martingales, we establish two types of moderate maximal inequalities for complex Brownian motions, which are stated below. The idea of the following result is similar to Corollary 2.7 in \cite{graversen2000maximal} but with more complex calculations.

\begin{corollary} \label{coro2.3}
Let $W = W^{(1)}+\mi{W^{(2)}}$ be a complex standard Brownian motion with respect to the filtration $\{\mathcal{F}_t\}$. Then for any moderate function $F$, there exist two constants $c_F,C_F>0$ such that for any stopping time $\tau$ with respect to the filtration $\{\mathcal{F}_t\}$,
\begin{equation}\label{complexBM1}
c_F\Enum F(\sqrt{\tau})
\leq \Enum F\biggl(\max_{0\leq t\leq \tau}\abs{W_t}\biggr)
\leq C_F\Enum FF(\sqrt{\tau}),
\end{equation}
\begin{equation}\label{complexBM2}
c_F\Enum F\biggl(\log^{1/2}\bigl(1+\log(1+\tau)\bigr)\biggr)
\leq \Enum F\biggl(\max_{0\leq t\leq \tau}\frac{\abs{W_t}}{\sqrt{1+t}}\biggr)
\leq C_F\Enum F\biggl(\log^{1/2}\bigl(1+\log(1+\tau)\bigr)\biggr).
\end{equation}
\end{corollary}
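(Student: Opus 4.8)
The plan is to derive both inequalities from results already established in the paper: for \eqref{complexBM1} I identify $|W|$ with a Bessel process, and for \eqref{complexBM2} I identify a rescaled version of $W$ with a complex Ornstein--Uhlenbeck process after a deterministic time change.

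For \eqref{complexBM1}, note that $|W_t|^2 = (W^{(1)}_t)^2 + (W^{(2)}_t)^2$. Applying It\^o's formula exactly as in the proof of Theorem \ref{Thm2.2} gives
\[
|W_t|^2 = \int_0^t 2\,ds + 2\int_0^t\sqrt{|W_s|^2}\,d\tilde B_s,
\]
where $\tilde B_t = \int_0^t(|W_u|^2)^{-1/2}\bigl(W^{(1)}_u\,dW^{(1)}_u + W^{(2)}_u\,dW^{(2)}_u\bigr)$ is, by L\'evy's characterization theorem, a standard Brownian motion with respect to $\{\mathcal{F}_t\}$. Hence $|W|^2$ is a $\mathrm{BESQ}(2,0)$ and $|W|$ is a $\mathrm{BES}(2,0)$ by Definition \ref{defbessel}. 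Since $\max_{0\le t\le\tau}|W_t| = U^{2,*}_\tau$, inequality \eqref{complexBM1} is exactly Corollary \ref{bessel} with $\alpha = 2$.

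For \eqref{complexBM2}, the key observation is that $t\mapsto W_t/\sqrt{1+t}$ becomes a complex OU process after reparametrizing time by $v = \log(1+t)$. I would define $Z_v := e^{-v/2}W_{e^v-1}$ for $v\ge 0$ and set $\mathcal{G}_v := \mathcal{F}_{e^v-1}$. Writing $M_v := W_{e^v-1}$, It\^o's formula (the finite-variation factor $e^{-v/2}$ contributes no bracket term) gives $dZ_v = -\tfrac12 Z_v\,dv + e^{-v/2}\,dM_v$, and the process $\hat W_v := \int_0^v e^{-r/2}\,dM_r$ is a complex $\mathcal{G}$-local martingale with $[\hat W^{(1)}]_v = [\hat W^{(2)}]_v = \int_0^v e^{-r}\,d(e^r-1) = v$ and $[\hat W^{(1)},\hat W^{(2)}]_v = 0$; by L\'evy's characterization theorem $\hat W$ is a complex $\mathcal{G}$-Brownian motion, so $Z$ solves \eqref{eq2.1} with $\alpha = 1/2$. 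Since $|W_t|/\sqrt{1+t} = |Z_{\log(1+t)}|$ pathwise and $t\mapsto\log(1+t)$ is an increasing bijection of $[0,\tau]$ onto $[0,\log(1+\tau)]$, we obtain $\max_{0\le t\le\tau}|W_t|/\sqrt{1+t} = Z^*_{\log(1+\tau)}$; moreover $\{\log(1+\tau)\le v\} = \{\tau\le e^v-1\}\in\mathcal{G}_v$, so $\log(1+\tau)$ is a $\mathcal{G}$-stopping time. Applying Theorem \ref{Thm2.2} with $\alpha = 1/2$ (so that $2a = 1$) and stopping time $\log(1+\tau)$ yields
\[
c_F\,\Enum F\bigl(\log^{1/2}\bigl(1+\log(1+\tau)\bigr)\bigr)\le\Enum F\bigl(Z^*_{\log(1+\tau)}\bigr)\le C_F\,\Enum F\bigl(\log^{1/2}\bigl(1+\log(1+\tau)\bigr)\bigr),
\]
which is \eqref{complexBM2}.

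All the computations are routine; the step needing the most care is the time-change bookkeeping in \eqref{complexBM2} --- confirming that the driving noise $\hat W$ of $Z$ is genuinely a complex Brownian motion with respect to the reparametrized filtration $\{\mathcal{G}_v\}$ (so that Theorem \ref{Thm2.2} applies), and that $\log(1+\tau)$ is a $\mathcal{G}$-stopping time. Both of these reduce to the deterministic relation between the clocks $v$ and $e^v-1$ together with L\'evy's characterization theorem.
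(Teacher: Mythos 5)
Your proof is correct and follows essentially the same route as the paper: \eqref{complexBM1} via the identification of $|W|$ with a $\mathrm{BES}(2,0)$ together with Corollary \ref{bessel}, and \eqref{complexBM2} via the deterministic time change that turns $W_t/\sqrt{1+t}$ into a complex OU process, checked by L\'evy's characterization, followed by Theorem \ref{Thm2.2} applied to the $\mathcal{G}$-stopping time $\log(1+\tau)$. The only difference is that you specialize to $\alpha=1/2$ real, whereas the paper carries a general $\alpha=a+\mi b$ through the same computation; your choice simplifies the It\^o calculation and loses nothing.
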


\begin{proof}
Since $|W|$ is a two-dimensional Bessel process starting from zero, the inequalities \eqref{complexBM1} follow directly from Corollary \ref{bessel}.

On the other hand, for any $a>0$ and $b\in\Rnum$, let $\alpha = a+\mi b$. To proceed, we define
\begin{equation*}
\mathcal{G}_t = \mathcal{F}_{e^{2at}-1},\;\;\;
Z_t = \frac{1}{\sqrt{2a}}e^{-\alpha t}W_{e^{2at}-1},\;\;\;
\tilde{W}_t = Z_t+\alpha\int_0^tZ_sds,
\end{equation*}
for any $t\geq 0$. Note that $B_t^{(1)} = W^{(1)}_{e^{2at}-1}$ and $B_t^{(2)} = W^{(2)}_{e^{2at}-1}$ are continuous martingales with respect to the filtration $\{\mathcal{G}_t\}$. Moreover, it is easy to see that
\begin{equation*}
\sqrt{2a}Z_t = e^{-at}\left(\cos(bt)B_t^{(1)}+\sin(bt) B_t^{(2)}\right)
+\mi e^{-at}\left(-\sin(bt)B_t^{(1)}+\cos(bt)B_t^{(2)}\right).
\end{equation*}
By It\^{o}'s formula, we have
\begin{align*}
\sqrt{2a}dZ_t =&\; -e^{-at}\left(a\cos(bt)B_t^{(1)}+a\sin(bt)B_t^{(2)}+b\sin(bt)B_t^{(1)}-b\cos(bt)B_t^{(2)}\right)dt\\
&\; +e^{-at}\cos(bt)dB_t^{(1)}+e^{-at}\sin(bt)dB_t^{(2)}\\
&\; +\mi e^{-at}\left(a\sin(bt)B_t^{(1)}-a\cos(bt)B_t^{(2)}-b\cos(bt)B_t^{(1)}-b\sin(bt)B_t^{(2)}\right)dt\\
&\; -\mi e^{-at}\sin(bt)dB_t^{(1)}+\mi e^{-at}\cos(bt)dB_t^{(2)}\\
=&\; -\alpha\sqrt{2a}Z_tdt+e^{-at}\cos(bt)dB_t^{(1)}+e^{-at}\sin(bt)dB_t^{(2)}\\
&\; -\mi e^{-at}\sin(bt)dB_t^{(1)}+\mi e^{-at}\cos(bt)dB_t^{(2)}.
\end{align*}
Hence we obtain
\begin{align*}
\sqrt{2a}d\tilde{W}_t =&\; \sqrt{2a}dZ_t+\alpha\sqrt{2a}Z_tdt\\
=&\; e^{-at}\cos(bt)dB_t^{(1)}+e^{-at}\sin(bt)dB_t^{(2)}
-\mi e^{-at}\sin(bs)dB_t^{(1)}+\mi e^{-at}\cos(bs)dB_t^{(2)}.
\end{align*}
This shows that $\tilde{W}_t = \tilde{W}_t^{(1)}+\mi\tilde{W}_t^{(2)}$, where
\begin{gather*}
\tilde{W}_t^{(1)} = \frac{1}{\sqrt{2a}}\int_0^te^{-as}\cos(bs)dB_s^{(1)}
+\frac{1}{\sqrt{2a}}\int_0^te^{-as}\sin(bs)dB_s^{(2)},\\
\tilde{W}_t^{(2)} = \frac{1}{\sqrt{2a}}\int_0^te^{-as}\cos(bs)dB_s^{(2)}
-\frac{1}{\sqrt{2a}}\int_0^te^{-as}\sin(bs)dB_s^{(1)}.
\end{gather*}
Moreover, it is easy to check that
\begin{align*}
[\tilde{W}^{(1)},\tilde{W}^{(1)}]_t
=&\; \frac{1}{2a}\int_0^te^{-2as}\cos^2(bs)d[B^{(1)},B^{(1)}]_s+\frac{1}{2a}\int_0^te^{-2as}\sin^2 (bs)d[B^{(2)},B^{(2)}]_s\\
=&\; \frac{1}{2a}\int_0^te^{-2as}d(e^{2as}-1) = t.
\end{align*}
Similarly, we can prove that $[\tilde{W}^{(2)},\tilde{W}^{(2)}]_t = t$ and $[\tilde{W}^{(1)},\tilde{W}^{(2)}]_t = 0$.  This shows that $W$ is complex Brownian motion with respect to $\{\mathcal{G}_t\}$ and thus $Z$ is a complex OU process. Let $H(t)=e^{2at}-1$ for any $t\geq 0$. Note that $\tau$ is a stopping time with respect to $\{\mathcal{F}_t\}$ if and only if $H^{-1}(\tau) = \log(1+\tau)/(2a)$ is a stopping time of
$\{\mathcal{G}_t\}$. Note that
\begin{equation*}
\sqrt{2a}|Z_t| = e^{-at}|W_{e^{2at}-1}| = \frac{|W_{H(t)}|}{\sqrt{H(t)+1}}.
\end{equation*}
This shows that
\begin{equation*}
\max_{0\leq t\leq \tau}\frac{\abs{W_t}}{\sqrt{1+t}} = \sqrt{2a}\max_{0\leq t\leq H^{-1}(\tau)}|Z_t|.
\end{equation*}
Thus it follows from Theorem \ref{Thm2.2} that
\begin{equation*}
\begin{split}
\Enum\left[\sup_{0\leq t\leq\tau}f\left(\frac{1}{\sqrt{2a}}\frac{|W_t|}{\sqrt{1+t}}\right)\right]
&\sim \Enum f\left(\log^{1/2}\left(1+2aH^{-1}(\tau)\right)\right)\\
&= \Enum f\left(\log^{1/2}\left(1+\log(1+\tau)\right)\right),
\end{split}
\end{equation*}
where $x\sim y$ means that there exists two constants $c,C>0$  such that $cx\leq y\leq Cx$. The desired result then follows from the definition of moderate functions.
\end{proof}

Since any conformal local martingale is a time change of the complex Brownian motion, the above corollary implies the moderate maximal inequalities for conformal local martingales.

\begin{corollary}
Let $M = X+\mi Y$ be a conformal local martingale with respect to the filtration $\{\mathcal{F}_t\}$ starting from zero. Then for any moderate function $F$, there exist two constants $c_F,C_F>0$ such that for any  stopping time $\tau$ of the filtration $\{\mathcal{F}_t\}$,
\begin{equation}\label{mtgl1}
c_F\Enum F\left(\sqrt{[X,X]_\tau}\right)
\leq \Enum F\biggl(\max_{0\leq t\leq \tau}\abs{M_t}\biggr)
\leq C_F\Enum F\left(\sqrt{[X,X]_\tau}\right),
\end{equation}
\begin{equation}\label{mtgl2}
c_F\Enum F\left(g\left([X,X]_\tau\right)\right)
\leq \Enum F\biggl(\max_{0\leq t\leq \tau}\frac{\abs{M_t}}{\sqrt{1+[X,X]_t}}\biggr)
\leq C_F\Enum F\left(g\left([X,X]_\tau\right)\right).
\end{equation}
where $g(t) = \log^{1/2}(1+\log(1+t))$ for $t\geq 0$.
\end{corollary}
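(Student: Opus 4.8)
The plan is to reduce both inequalities to Corollary \ref{coro2.3} by means of the complex Dubins--Schwarz time-change representation of conformal local martingales. Recall from \cite{revuz1999continuous} that a complex continuous local martingale $M = X+\mi Y$ vanishing at zero is conformal if and only if $M_t = W_{[X,X]_t}$ for all $t\geq 0$, where $W = W^{(1)}+\mi W^{(2)}$ is a complex Brownian motion (after enlarging the probability space if necessary so that $W$ is defined on all of $[0,\infty)$). The point is that conformality forces $[X,X]=[Y,Y]$ and $[X,Y]=0$, so $X$ and $Y$ are orthogonal continuous local martingales sharing the single clock $[X,X]$, and Dubins--Schwarz applied coordinatewise with this common clock produces exactly such a $W$. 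Introduce the time change $T_s = \inf\{t\geq 0:[X,X]_t>s\}$ and the filtration $\mathcal{G}_s = \mathcal{F}_{T_s}$; then $W$ is a complex Brownian motion with respect to $\{\mathcal{G}_s\}$.

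Next I would transfer the running maximum across the time change. Since $t\mapsto [X,X]_t$ is continuous and nondecreasing with $[X,X]_0=0$, the image of $[0,\tau]$ under this map is precisely $[0,[X,X]_\tau]$, whence
\begin{equation*}
\max_{0\leq t\leq\tau}\abs{M_t} = \max_{0\leq t\leq\tau}\abs{W_{[X,X]_t}} = \max_{0\leq s\leq [X,X]_\tau}\abs{W_s},
\qquad
\max_{0\leq t\leq\tau}\frac{\abs{M_t}}{\sqrt{1+[X,X]_t}} = \max_{0\leq s\leq [X,X]_\tau}\frac{\abs{W_s}}{\sqrt{1+s}}.
\end{equation*}
Moreover, the standard fact that a continuous adapted nondecreasing process evaluated at a stopping time gives a stopping time of the time-changed filtration shows that $[X,X]_\tau$ is a $\{\mathcal{G}_s\}$-stopping time whenever $\tau$ is an $\{\mathcal{F}_t\}$-stopping time. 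Applying the inequalities \eqref{complexBM1} and \eqref{complexBM2} of Corollary \ref{coro2.3} to $W$, the filtration $\{\mathcal{G}_s\}$, and the stopping time $[X,X]_\tau$, and then substituting the two displayed identities, yields \eqref{mtgl1} and \eqref{mtgl2} at once (with $g(t)=\log^{1/2}(1+\log(1+t))$ appearing exactly as the control function of \eqref{complexBM2} evaluated at $[X,X]_\tau$).

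The main obstacle is the degenerate case $[X,X]_\infty<\infty$: there the time change is not surjective onto $[0,\infty)$, and the Brownian motion furnished by Dubins--Schwarz on the original space is a priori defined only up to time $[X,X]_\infty$, so one must invoke the version of the theorem that appends an independent Brownian motion to extend $W$ to all of $[0,\infty)$; this extension is harmless for the argument since we only ever evaluate $W$ at times $s\leq [X,X]_\tau\leq [X,X]_\infty$, so the running-maximum identities are unaffected. A minor additional point is to note that Corollary \ref{coro2.3} was proved for an arbitrary filtration with respect to which $W$ is a complex Brownian motion, so it legitimately applies to $\{\mathcal{G}_s\}$ and not merely to the natural filtration of $W$. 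Once these points are in place, no further estimates are needed and the result follows by direct substitution.
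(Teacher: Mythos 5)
Your proposal is correct and follows essentially the same route as the paper: invoke the time-change representation $M_t = W_{[X,X]_t}$ of a conformal local martingale (Revuz--Yor, Chapter V, Theorem 2.4) and apply Corollary \ref{coro2.3} to the time-changed Brownian motion and the stopping time $[X,X]_\tau$. The extra care you take with the case $[X,X]_\infty<\infty$ and with the filtration under which Corollary \ref{coro2.3} is applied is sound and merely makes explicit what the paper leaves implicit.
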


\begin{proof}
Since $M$ is a conformal local martingale with $M_0 = 0$, there exists a complex standard Brownian motion $W$ such that $M_t = W_{[X,X]_t}$ \cite[Chapter V, Theorems 2.4]{revuz1999continuous}. The desired result then follows directly from Corollary \ref{coro2.3}.
\end{proof}

For any conformal local martingale $M$, \eqref{mtgl1} shows that the maximum process of $|M|^p$ on average behaves as $[X,X]^{p/2}$ for any $p>0$. Furthermore, \eqref{mtgl2} shows that the maximum process of $|M|^p$, normalized by $(1+[X,X])^{p/2}$, on average behaves as $\log^{p/2}(1+\log(1+[X,X]))$ for any $p>0$. The relationship between these two results is rather similar to that between the law of large numbers and the central limit theorem.

\section*{Acknowledgements}
X. Chen is supported by National Natural Science Foundation of China (NSFC) with grant No. 11701483. Y. Chen is supported by NSFC with grant No. 11961033. C. Jia acknowledges support from the NSAF grant in NSFC with grant No. U1930402.

%%%%%%%%%% References %%%%%%%%%%
\setlength{\bibsep}{5pt}
\small\bibliographystyle{nature}
%\bibliography{thesis}

\end{document}